\documentclass[11pt]{amsart}

\usepackage[T1]{fontenc}
\usepackage[letterpaper,top=1.10in,bottom=0.5 in,left=1.25in,right=1.25in,footskip=.40in]{geometry}
\usepackage{amssymb,mathrsfs}
\usepackage{graphicx}
\usepackage{tikz}
\usetikzlibrary{arrows.meta,calc,positioning,fit,decorations.pathreplacing,backgrounds}
\usepackage{booktabs,array,ytableau}
\usepackage{enumitem}
\usepackage[font=small,labelfont=sc,labelsep=period]{caption}
\usepackage[hidelinks]{hyperref}
\usepackage[nameinlink,noabbrev]{cleveref}

\makeatletter
\let\ps@headings\ps@plain
\let\ps@firstpage\ps@plain
\makeatother

\numberwithin{equation}{section}
\allowdisplaybreaks[2]

\AtBeginDocument{%
	\setlength{\abovedisplayskip}{10pt plus 3pt minus 2pt}%
	\setlength{\belowdisplayskip}{10pt plus 3pt minus 2pt}%
	\setlength{\abovedisplayshortskip}{6pt plus 2pt minus 1pt}%
	\setlength{\belowdisplayshortskip}{6pt plus 2pt minus 1pt}%
}
\newtheorem{theorem}{Theorem}[section]
\newtheorem{lemma}[theorem]{Lemma}
\newtheorem{conj}[theorem]{Conjecture}
\newtheorem{prop}[theorem]{Proposition}
\newtheorem{corollary}[theorem]{Corollary}
\theoremstyle{remark}

\title{A combinatorial proof for the  positivity of the normalized  Jacobi triple product tails}
\author{Xiangyu Ding}
\address{Center for Combinatorics, LPMC, Nankai University, Tianjin 300071, P.~R. China}
\email{dingmath@mail.nankai.edu.cn}
\author{Lisa Hui Sun}
\address{Center for Combinatorics, LPMC, Nankai University, Tianjin 300071, P.~R. China}
\email{sunhui@nankai.edu.cn}
\date{}
\subjclass[2020]{ 11P81, 05A17, 33D15, 05A19}
\keywords{Jacobi triple product, truncated theta series, partition inequalities, minimal excludant, Frobenius symbol, involution}

\begin{document}
	
	\begin{abstract}
		For \(k\geq 1\), we prove that
		\[
		[q^n z^s]J_k(z,q)\geq 0,
		\qquad (n\geq 0,\ s\in\mathbb Z) 
		\]
		for the normalized Jacobi triple product tails
		\[
		J_k(z,q)=
		\frac{ \sum_{j=k}^{\infty}(-1)^{j-k}
			q^{\binom{j+1}{2}}(z^{-j}+\cdots+z^j)}
		{(zq,q/z;q)_\infty}.
		\] 
		This result not only implies Merca's stronger nonnegativity conjecture  on truncated Jacobi triple product series in full generality, but also yields infinite families of linear inequalities for two-colored partitions and partitions with parts in the residue classes 
		$\pm S \pmod{R}$. We present a combinatorial proof wherein a sign-reversing involution reduces the normalized Jacobi triple product tails to the  invariant subsets according to the generalized minimal--excludant of partitions. Furthermore, by combing an invertible lift operator on Frobenius arms with Konan's size- and length-preserving bijection, an injection is constructed between the consecutive invariant subsets, which implies the coefficientwise positivity of the normalized Jacobi triple product tails. 
	\end{abstract}
	
	\maketitle
	\thispagestyle{plain}
	
	\section{Introduction and main results}
	
	Euler's pentagonal-number theorem 
	\[
	\prod_{n=1}^{\infty}(1- q^n)=\sum_{j\in\mathbb Z}(-1)^j q^{j(3j-1)/2},
	\]
	stands as the foundational cornerstone of $q$-series and combinatorics. Multiplying it by $(q;q)_\infty^{-1}=\sum_{n\ge0}p(n)q^n$ yields the well-known  alternating recurrence relation for the partition function $p(n)$.
	Shanks \cite{Shanks1951} obtained a finite form of the pentagonal series in 1951. Researchers such as Warnaar\cite{Warnaar2002} and Chapman\cite{Chapman2003} have extensively explored other related finite forms and partial-sum identities.  The systematic study of coefficientwise positivity for such truncations began with the following  identity due to Andrews and Merca
	\cite{AndrewsMerca2012} 
	\[
	\frac{1}{(q;q)_\infty}
	\sum_{j=0}^{k-1}(-1)^j q^{j(3j+1)/2}(1-q^{2j+1})
	=
	1+(-1)^{k-1}\sum_{n\ge1}
	\frac{q^{\binom{k}{2}+(k+1)n}}{(q;q)_n}
	\genfrac{[}{]}{0pt}{}{n-1}{k-1}_{\!q}.
	\]
	They   interpreted the nonconstant coefficients as follows,
	\begin{equation}\label{eq:Mk-AM}
		(-1)^{k-1}\sum_{j=0}^{k-1}(-1)^j
		\bigl(p(n-j(3j+1)/2)-p(n-j(3j+5)/2-1)\bigr)=M_k(n),
	\end{equation}
	where  $M_k(n)$ is
	the number of partitions of $n$ in which $k$ is the least positive integer that
	does not occur as a part and the number of parts larger than $k$ is greater than
	the number of parts smaller than $k$.   
	This immediately yields the corresponding nonnegativity of the truncated
	pentagonal sums. We refer to \cite{KolitschBurnette2015,XiaZhao2022,Zhou2024} for the  subsequent bijective and analytic work 	clarified this interpretation which produced companion inequalities.
	
	Guo and Zeng \cite{GuoZeng2013} then established truncations of Gauss's identities on triangular numbers and 
	square numbers, which lead to inequalities for overpartitions and
	partitions with odd parts distinct.  Andrews and Merca \cite{AndrewsMerca2018} showed
	that both truncations given by Euler and Gauss  admit a unified treatment through the 	Rogers--Fine identity. Xia, Yee, and Zhao \cite{XiaYeeZhao2022} later
	constructed broader families of truncated forms for these three classical theta
	identities.  These developments made it natural to seek
	a truncation theory at the level of the Jacobi triple product, which reduces to  
	Euler and Gauss' identities as special cases.
	
	Andrews--Merca and Guo--Zeng independently conjectured that appropriately
	normalized finite truncations of this bilateral series have nonnegative
	coefficients.  More precisely, for $1\le S<R/2$ they asked whether every positive-degree
	coefficient of
	\begin{equation}\label{mercaconj}
		(-1)^{k-1} \frac{\sum_{j=0}^{k-1}(-1)^j q^{R j(j+1) / 2-S j}\left(1-q^{(2 j+1) S}\right)}{\left(q^S, q^{R-S}, q^R ; q^R\right)_{\infty}},
	\end{equation}
	is	nonnegative. Note that the complete series over $j\geq 0$ on the numerator  is just the Jacobi triple product seris which equals to 
	$(q^S,q^{R-S},q^R;q^R)_\infty$.  It have been certified analytically by   Mao \cite{Mao2015} and combinatorially by Yee in 2015
	\cite{Yee2015}.  Wang and Yee \cite{WangYee2019} subsequently obtained an explicit multiple-sum form based on  the
	Bailey pair methods and a $q$-series expansion of Liu \cite{Liu}.  The bilateral variants, averaged truncations, Hecke--Rogers analogues,
	and various combinatorial interpretations have since been developed in 	\cite{HeJiZang2016,JinLiuXia2025,SchlosserZhou2024,WangYee2020,Yao2025}.
	
	Merca observed that the preceding finite-truncation statement \eqref{mercaconj} can be
	strengthened substantially.  In  \cite{Merca2021,Merca2022}, Merca proposed a positivity conjecture
	after removing the factor	$(q^R;q^R)_\infty$ from the denominator of \eqref{mercaconj} and  extending the
	parameter range to  $1\le S<R$ .
	
	\begin{conj} \label{conj:merca-series}
		For positive integers $k, R, S$ with  $1 \leq S<R  $, the coefficient of $q^n$ with $n \geq 1$ in
		\begin{align}\label{conj-stronger}
			\mathcal T_{R,S,k}(q)=\frac{\sum_{j=k}^{\infty}(-1)^{j-k} q^{R j(j+1) / 2-S j}\left(1-q^{(2 j+1) S}\right)}{\left( q^S, q^{R-S}  ; q^R\right)_{\infty}}
		\end{align}
		is nonnegative.
	\end{conj}
	
	Ballantine and
	Feigon proved the first three truncation levels $k=1,2,3$ and obtained several
	related identities \cite{BallantineFeigon2025}.  Ding and Sun \cite{DingSunBailey2025} proved the
	specialization $R=3S$ by applying Bailey lattice.  Mao \cite{MaoAsymptotic2024} used Wright's circle method to obtain
	asymptotic formulas for the relevant Jacobi and quintuple-product tails,
	thereby proving the corresponding conjectures for sufficiently large
	coefficients.  In a separate analytic treatment,
	Ding and Sun \cite{DingSunPreprint2025} further established eventual positivity with an effective procedure for
	a threshold $N(R,S,k)$ and also proved the positivity when the truncation
	parameter is sufficiently large.
	Zhou  \cite{Zhou2024} obtained further tail-positivity results and
	specializations in a systematic study of pentagonal tails. Thereby this conjecture remains open in full generality  until now.

	The terminology ``tail" also carries a natural physical interpretation. In two-dimensional conformal field theory and solvable lattice models, $q$-series represent graded partition functions where $q$ tracks energy and $z$ acts as a charge fugacity \cite{AlvarezGaumeMooreVafa,DiFrancescoMathieuSenechal}. Truncating these series corresponds to a finite-volume restriction or an ultraviolet cutoff \cite{AndrewsBaxterForrester,MelzerCTM}. Consequently, the tail'' is the high-energy remainder of the spectrum. In this paper, we consider the normalized Jacobi triple product tails as follows
	\begin{equation}\label{eq:J-def}
		J_k(z,q)=
		\frac{ \sum_{j=k}^{\infty}(-1)^{j-k}q^{T_j}\chi_j(z)}
		{(zq,q/z;q)_\infty},
	\end{equation}
	where  $k\geq 1$,  $T_j=\binom{j+1}{2}$ and
	$\chi_j(z)=\sum_{u=-j}^{j}z^u$.   The Laurent variable $z$ records the difference between the numbers of  
	$A$-colored parts and $B$-colored parts in the  two colored-partitions.  Combinatorially, we prove the   coefficientwise positivity of $J_k(z,q)$.
	
	\begin{theorem}\label{thm:main}
		For every integer $k\ge1$,
		\begin{equation}\label{eqmain}
			J_k(z,q)\in\mathbb Z_{\ge0}[z,z^{-1}][[q]].
		\end{equation}
		Equivalently, $[q^{n}z^{s}]\,J_k(z,q)\ge0$ for all $n\ge0$ and
		$s\in\mathbb Z$.
	\end{theorem}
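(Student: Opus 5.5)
The plan has three steps: rewrite $J_k$ so that it becomes a sum of differences of consecutive pieces, interpret each piece as a generating function of two-colored partitions cut out by a minimal-excludant condition, and then show coefficientwise positivity by an injection between consecutive pieces.

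\textbf{Step 1: a telescoping form.} Write $1/(zq,q/z;q)_\infty$ as the generating function of pairs $(\alpha,\beta)$ of ordinary partitions. Here $\alpha$ consists of $A$-colored parts weighted by $z$, and $\beta$ consists of $B$-colored parts weighted by $z^{-1}$. Put $D_j=q^{T_j}\chi_j(z)/(zq,q/z;q)_\infty$, so that
\[
J_k=\sum_{j\ge k}(-1)^{j-k}D_j=(D_k-D_{k+1})+(D_{k+2}-D_{k+3})+\cdots .
\]
It therefore suffices to show that $D_j-D_{j+1}$ is coefficientwise nonnegative for every $j$. Convergence is clear, since the $q$-degree of $D_j$ is at least $T_j$.

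\textbf{Step 2: a sign-reversing involution.} Interpret $q^{T_j}z^u$, for $|u|\le j$, as adjoining a staircase of parts $1,\dots,j$ whose colors are distributed so that the charge is $u$. One natural realization: the parts $1,\dots,j$ are split between the two colors in a way governed by $u$. The alternating sum then becomes a signed set. I would define a sign-reversing involution that toggles the smallest part size $m$ not appearing among parts of a suitable combined type, i.e.\ a generalized minimal excludant adapted to the two colors. The involution either absorbs the staircase row $j+1$ into $(\alpha,\beta)$ or releases it.

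Its fixed points should form invariant sets $\mathcal I_j$, defined by a mex-type condition equal to $j$, much as $M_k(n)$ arises in \eqref{eq:Mk-AM}. These sets come with a size- and charge-preserving correspondence, giving an identity of the form
\[
J_k=\sum_{j\ge k}(-1)^{j-k}\,G_j(z,q),
\]
where $G_j$ is the generating function of $\mathcal I_j$.

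\textbf{Step 3: an injection between consecutive invariant sets.} This is the main obstacle. For each fixed size $n$ and charge $s$, I need an injection $\mathcal I_{j+1}\hookrightarrow\mathcal I_j$ that preserves both. Taking the alternating sum in pairs then gives nonnegativity.

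To build it, I would encode a pair $(\alpha,\beta)$ by its Frobenius symbol, so that the mex condition becomes a condition on the arms. I would then define a lift operator: add one to each of the first $j$ arms and remove a compensating cell elsewhere, so that an element with mex $j+1$ becomes one with mex $j$. The charge would be controlled by the number of colored parts.

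Because the lift changes the multiset of parts, I would follow it with Konan's size- and length-preserving bijection between two-colored partition classes. This restores the charge statistic, since the charge is a length difference. The delicate points are invertibility of the lift on its image, and the boundary case when some arm is $0$. I would first check small $j$, namely $j=1$ and $2$, by explicit generating-function computation to calibrate the construction.
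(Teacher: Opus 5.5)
Your proposal names the right ingredients: an involution governed by a minimal excludant, a Frobenius-arm lift, and Konan's bijection. It has genuine gaps, however, starting with Step 1. The reduction to $D_j-D_{j+1}\succeq 0$ reduces to a false statement. Since $D_j-D_{j+1}=J_j-J_{j+2}$, it would assert $J_{j+2}\preceq J_j$, and at $z=1$
\[
D_j-D_{j+1}=\frac{q^{T_j}\bigl((2j+1)-(2j+3)q^{j+1}\bigr)}{(q;q)_\infty^{2}}
\]
is negative for real $q$ near $1$, so it has negative coefficients. Steps 2--3 escape this only if the blocks $G_j$ are chosen quite differently from $D_j$, and your Step 2 does not produce such blocks. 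Coloring the parts $1,\dots,j$ of the staircase reaches only charges $u\equiv j\pmod 2$, and with binomial multiplicities, so it does not realize $\chi_j(z)$.

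The model that works, which is the paper's, is different. Take the staircase entirely $B$-colored, with weight $q^{T_j}z^{-j}$, add $m\in[0,2j]$ atoms of weight $z$, and toggle $j$ between $\max\{k,\lceil m/2\rceil\}$ and $\operatorname{mex}(\beta)-1$. The fixed points are governed by the \emph{parity} of $\operatorname{mex}(\beta)-r$, not by ``mex equal to $j$''. That parity is exactly what matches Konan's class $\mathcal M_r$. The resulting decomposition is $J_k=(1+\cdots+z^{2k})C_k+\sum_{r>k}(-1)^{r-k}z^{2r-1}(1+z)C_r$. The inequality actually needed is therefore $z^2C_r\preceq C_{r-1}$, so the injection must raise the charge $\ell(\alpha)-\ell(\beta)$ by exactly $2$ rather than preserve it.

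Step 3 carries the whole weight of the proof and is not constructed. The instruction ``add one to each of the first $j$ arms and remove a cell elsewhere'' comes with no specified inverse and no control of the mex. Encoding by Frobenius symbols does not by itself turn a mex condition into an arm condition; that translation is precisely the content of Konan's theorem. Konan's map also acts on single partitions, $\mathcal M_r\to\mathcal F_r$, not on two-colored classes. It must be applied \emph{before} the lift, not after.

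The missing argument runs as follows.
\begin{enumerate}
\item Write $\beta=\delta_r\sqcup\gamma$ with $\gamma\in\mathcal M_r$, and set $\lambda=\mathsf K_r(\gamma)\in\mathcal F_r$.
\item Trade the $B$-part $r$ of $\delta_r$, together with the two atoms, for an $A$-part of weight $q^rz$.
\item If $r-1\notin\operatorname{Top}(\lambda)$, insert $r$ into $\alpha$ and keep $\mu=\lambda$. Otherwise insert $r-1$ into $\alpha$ and let $\mu$ be $\lambda$ with its arm $r-1$ raised to $r$. In both cases $\mu\in\mathcal F_{r-1}$, and the branch is recovered from whether $r\in\operatorname{Top}(\mu)$.
\item Take $\beta'=\delta_{r-1}\sqcup\mathsf K_{r-1}^{-1}(\mu)$.
\end{enumerate}
Without this or an equivalent injection, nothing in your plan establishes $z^2C_r\preceq C_{r-1}$, and the final pairing argument has nothing to rest on.
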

	
	The above theorem leads to inequalities on partition pairs and two-colored partitions immediately.Moreover, by substituting $(z,q)=(q^S,q^R)$ into the above theorem, it directly reduces to Merca's stronger conjecture  \ref{conj:merca-series}.  
	
	\begin{corollary}\label{cor:merca} Merca's stronger conjecture \ref{conj:merca-series}  holds.
	\end{corollary}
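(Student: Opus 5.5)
The statement to prove is Corollary~\ref{cor:merca}. The plan is to obtain it from Theorem~\ref{thm:main} by the specialization $(z,q)\mapsto(q^S,q^R)$, checking that this is legitimate and that it carries nonnegative coefficients to nonnegative coefficients.

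First, the numerator. With $T_j=\binom{j+1}{2}$ and $\chi_j(z)=\sum_{u=-j}^{j}z^u$, we have
\[
q^{R T_j}\chi_j(q^S)=\sum_{u=-j}^{j}q^{Rj(j+1)/2+Su}
=q^{Rj(j+1)/2-Sj}\,\frac{1-q^{(2j+1)S}}{1-q^S}.
\]
Hence the numerator of \eqref{eq:J-def} becomes
\[
\frac{1}{1-q^S}\sum_{j\ge k}(-1)^{j-k}q^{Rj(j+1)/2-Sj}\bigl(1-q^{(2j+1)S}\bigr).
\]

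Second, the denominator. It becomes $(q^{R+S},q^{R-S};q^R)_\infty$, and
\[
(1-q^S)\,(q^{R+S};q^R)_\infty=(q^S;q^R)_\infty .
\]
Therefore the factor $1-q^S$ cancels and
\[
J_k(q^S,q^R)=\mathcal T_{R,S,k}(q).
\]

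Third, the legitimacy of the substitution. Write $J_k(z,q)=\sum_{n\ge0}\sum_s c_{n,s}q^nz^s$ with $c_{n,s}\ge0$ by Theorem~\ref{thm:main}. A monomial $q^nz^s$ maps to $q^{Rn+Ss}$. For this to be a well-defined power series, each exponent $m$ must receive only finitely many contributions, and all exponents must be nonnegative. Both follow from a bound on $|s|$ in terms of $n$.

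In the numerator every term has $|u|\le j$ with $q$-degree $T_j\ge j$. In $1/(zq,q/z;q)_\infty$ each factor $z^{\pm1}$ comes with at least one power of $q$. Hence every monomial of $J_k$ satisfies $|s|\le n$. Since $S<R$, we get $Rn+Ss\ge(R-S)n\ge0$. So only finitely many pairs $(n,s)$ land on a given exponent $m$, and the substitution is well defined.

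Fourth, the conclusion. The coefficient $[q^m]\mathcal T_{R,S,k}(q)$ equals $\sum_{Rn+Ss=m}c_{n,s}$, a finite sum of nonnegative integers, so it is nonnegative. This holds for all $m\ge0$, in particular for $m\ge1$ as Conjecture~\ref{conj:merca-series} requires.

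The substance of the argument lies entirely in Theorem~\ref{thm:main}, which is assumed. The only delicate point here is the growth bound $|s|\le n$: without it the specialization could produce negative exponents or infinite sums.
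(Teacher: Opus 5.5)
Your proposal is correct and is the same argument as the paper's: the paper also obtains the corollary by substituting $(z,q)=(q^S,q^R)$ into Theorem~\ref{thm:main}. The paper states this substitution in one line. You additionally verify the identity $J_k(q^S,q^R)=\mathcal T_{R,S,k}(q)$, including the cancellation of $1-q^S$, and the support bound $|s|\le n$ that makes the specialization well defined and positivity-preserving; the paper leaves both implicit.
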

	
	Our proof is based on the minimal excludant of integer partitions, which is denoted by mex in short.  The statistic {\rm mex}  has long
	been used in combinatorial theory, see, for example, Fraenkel and
	Peled \cite{FraenkelPeled2015}.  Andrews and Newman \cite{AndrewsNewman2019,AndrewsNewman2020} introduced it
	systematically into partition theory and related its moments to classical
	partition functions.  The parity
	of the mex was then connected with Dyson's crank, and
	Hopkins, Sellers and Stanton \cite{HopkinsSellersStanton2022} formulated a shifted version that records the first
	missing part above a prescribed level, see also \cite{AndrewsNewman2019,AndrewsNewman2020,BallantineMerca2021,		HopkinsSellersStanton2022,HopkinsSellersYee2022,Konan2023}.
	
	For a partition $\lambda$,
	let
	\[
	\operatorname{mex}_r(\lambda)=\min\{t>r:t\text{ is not a part of }\lambda\},
	\]
	and let $\operatorname{Top}(\lambda)$ denote the set of the elements contained in the top row of its Frobenius symbol \cite{AndrewsPartitions}.  For $r\geq 0$, define
	\[
	\mathcal M_r=\{\lambda:\operatorname{mex}_r(\lambda)-r\text{ is odd}\},\qquad
	\mathcal F_r=\{\lambda:r\notin\operatorname{Top}(\lambda)\}.
	\]
	Konan \cite{Konan2023} established an explicit  bijection for these generalized mex classes. 
	
	\begin{theorem}[Konan \cite{Konan2023}]\label{thm:Konan}
		For every $r\ge0$ there is a bijection
		\[
		\mathsf K_r:\mathcal M_r\longrightarrow\mathcal F_r
		\]
		that preserves both size and length 
		\[
		|\mathsf K_r(\lambda)|=|\lambda|,\qquad
		\ell(\mathsf K_r(\lambda))=\ell(\lambda).
		\]
	\end{theorem}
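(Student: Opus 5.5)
Since $\mathsf K_r$ is only required to preserve size and length, it suffices to show that $\mathcal M_r$ and $\mathcal F_r$ have the same bivariate generating function $\sum_\lambda x^{\ell(\lambda)}q^{|\lambda|}$. An explicit map then follows by matching, within each class of fixed size and length, the two equinumerous finite sets. If an explicit and natural $\mathsf K_r$ is wanted, the combinatorial decompositions below would serve as its building blocks. It is cleaner to compare the complements: partitions with $\operatorname{mex}_r(\lambda)-r$ even, and partitions with $r\in\operatorname{Top}(\lambda)$.

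\emph{The mex side.} Here $\operatorname{mex}_r(\lambda)-r=2m$ for some $m\ge1$. This means the parts $r+1,\dots,r+2m-1$ all occur and $r+2m$ does not occur. Removing one copy of each forced part and deleting the factor for the forbidden part gives
\[
\sum_{\lambda\notin\mathcal M_r}x^{\ell(\lambda)}q^{|\lambda|}
=\frac{1}{(xq;q)_\infty}\sum_{m\ge1}x^{2m-1}q^{(2m-1)r+m(2m-1)}\bigl(1-xq^{r+2m}\bigr).
\]

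\emph{The Frobenius side.} The top row of the Frobenius symbol consists of the numbers $\lambda_i-i$ for $i\le d$, where $d$ is the Durfee side. Since $\lambda_i-i$ is strictly decreasing, $r\in\operatorname{Top}(\lambda)$ holds exactly when there is a (necessarily unique) index $i$ with $\lambda_i=r+i$. The condition $i\le d$ is then automatic, because $\lambda_i=r+i\ge i$. Fix that $i$ and cut the diagram into three pieces.
\begin{itemize}[leftmargin=*]
\item An $i\times(r+i)$ rectangle, contributing $x^iq^{i(r+i)}$.
\item The excesses $\lambda_j-(r+i)$ for $j<i$. These form an arbitrary partition into at most $i-1$ parts and contribute no length, giving a factor $1/(q;q)_{i-1}$.
\item The rows below row $i$. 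These form an arbitrary partition with parts at most $r+i$, giving a factor $1/(xq;q)_{r+i}$.
\end{itemize}
Hence
\[
\sum_{\lambda\notin\mathcal F_r}x^{\ell(\lambda)}q^{|\lambda|}
=\sum_{i\ge1}\frac{x^iq^{i(r+i)}}{(q;q)_{i-1}(xq;q)_{r+i}}.
\]

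\emph{The identity.} The theorem is thus reduced to the $q$-series identity
\[
\sum_{i\ge1}\frac{x^iq^{i(r+i)}(xq^{r+i+1};q)_\infty}{(q;q)_{i-1}}
=\sum_{m\ge1}x^{2m-1}q^{(2m-1)r+m(2m-1)}\bigl(1-xq^{r+2m}\bigr),
\]
and proving it is the main obstacle. I would first expand $(xq^{r+i+1};q)_\infty$ by Euler's identity $\sum_{t\ge 0}(-1)^tx^tq^{\binom{t}{2}+t(r+i+1)}/(q;q)_t$ and collect the coefficient of $x^N$. That coefficient is a finite alternating sum over $i+t=N$, which should collapse by the $q$-binomial theorem $\sum_t(-1)^tq^{\binom t2}\genfrac{[}{]}{0pt}{}{N-1}{t}_q u^t=(u;q)_{N-1}$ evaluated at a point where it vanishes or becomes a single monomial. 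The expected outcome is that the coefficient is nonzero only for $N=2m-1$ (sign $+$) and $N=2m$ (sign $-$), with the exponents shown above.

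Should this bookkeeping become messy, the fallback is induction on $r$. Both sides satisfy the same $q$-difference relation in $r$, obtained by splitting off the parts equal to $r+1$, and at $r=0$ the identity reduces to the classical mex-parity result of Andrews and Newman, which serves as the base case.
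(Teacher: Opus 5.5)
Your proposal is correct, and it takes a genuinely different route from the paper. The paper does not prove this statement; it imports Konan's explicit algorithm, the iterated moves K1/K2 recalled in the appendix.

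Both of your generating functions are right. The identity you flag as the main obstacle is in fact a two-line consequence of the tool you name. With $i=N-t$ the exponent becomes $rN+N(N-t)+\binom t2+t$, so
\[
[x^N]\,\mathrm{LHS}=\frac{q^{rN+N^2}}{(q;q)_{N-1}}\sum_{t=0}^{N-1}(-1)^tq^{\binom t2}\genfrac{[}{]}{0pt}{}{N-1}{t}_{q}\,q^{(1-N)t}=\frac{q^{rN+N^2}\,(q^{1-N};q)_{N-1}}{(q;q)_{N-1}}=(-1)^{N-1}q^{rN+\binom{N+1}{2}}.
\]
This is exactly the right-hand side for both $N=2m-1$ and $N=2m$. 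Nothing vanishes: every $N\ge1$ contributes one signed monomial. So you should simply carry this out. The induction fallback is unnecessary, and its base case would in any event need the length-refined Frobenius form of the $r=0$ statement, not the crank-based Andrews--Newman result.

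What your approach buys is a short, self-contained proof that both complements have generating function
\[
\frac{1}{(xq;q)_\infty}\sum_{N\ge1}(-1)^{N-1}x^Nq^{rN+\binom{N+1}{2}}.
\]
This proves the statement as written, which only asserts existence. It also suffices for the paper's injection $\Theta_r$, which uses only that $\mathsf K_r$ is a size- and length-preserving bijection.

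What it loses is explicitness. Your $\mathsf K_r$ is an arbitrary matching of equinumerous finite sets, whereas Konan's map is a concrete, invertible procedure. That is what lets the paper compute $\mathsf K_2(4,4)=(5,3)$ in its worked example and present $\Theta_r$ as a genuinely explicit combinatorial injection.
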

	
	To give our combinatorial proof, first we construct a bidegree-preserving,
	sign-reversing involution on  set of quadruples.
	Then  the unpaired elements are organized into
	invariant subsets according to the mex   \(\mathcal C_r\) with the corresponding generating function $C_r(z,q)$. Based on Konan's bijection, we derive an 
	injection \(\Theta_r\) on this subset which  makes each negative block  in the invariant subset    is injected into a preceding positive block  by	\(\Theta_r\).   The argument therefore proves coefficientwise positivity of $J_k(z,q)$.

	This  paper is organized as follows.  Section \ref{chap:pre} introduces the basic definitions and 
	notations  for partitions and the formal power series.  Section \ref{chap:map} derives the
	invariant subset according to the mex decomposition combinatorially and records its direct
	\(q\)-series counterpart.  Section \ref{chap:first} constructs the
	Frobenius-arm lift and the injection between consecutive invariant subsets, thereby
	proving Theorem \ref{thm:main}.  Section \ref{chap:conclu} contains
	concluding remarks.  In Appendix \ref{chap:appendix}, we recall the detailed procedures of  Konan's bijection.

	\section{Partitions, minimal excludants, and colored indices}\label{chap:pre}
	
	In this paper, we assume that $|q|<1$ and all the power formal series are interpreted in 
	$\mathscr R=\mathbb Z[z,z^{-1}][[q]]$, where the coefficient of each $q^n$ is a
	Laurent polynomial in $z$.	Thereby,  for a
	fixed bidegree $(n,s)$ only finitely many summands can contribute to the coefficient of $q^nt^s$.
	
	Let
	\[
	(a;q)_n=\prod_{u=0}^{n-1}(1-aq^u),\qquad
	(a;q)_\infty=\prod_{n=0}^{\infty}(1-aq^n),
	\]
	and   $(a_1,a_2,\ldots,a_t;q)_\infty=\prod_{i=1}^t(a_i;q)_\infty$.
	
	We write $F\succeq G$, or equivalently $G\preceq F$, if every coefficient
	of $F-G$ is nonnegative. 
	
	A partition  \cite{AndrewsPartitions} of a nonnegative integer $n$ is a finite weakly decreasing sequence $\lambda_1\geq \lambda_2\geq \cdots \geq \lambda_k$ of positive integers such that $\sum_{i=1}^k \lambda_i=n$, which can be denoted by $\lambda=(\lambda_1, \lambda_2, \ldots, \lambda_k)$.
	The set of all partitions, including the empty partition, is denoted by
	$\mathcal P$.  For $\lambda \in\mathcal P$, let  $|\lambda|$, $\ell(\lambda)$, and
	$m_t(\lambda)$ denote its size, length, and multiplicity of the part $t$.
	Multiset union is written as $\lambda\sqcup\mu$.  Denote the staircase in the set of partitions by  
	\[
	\delta_r=(r,r-1,\ldots,1),\qquad \delta_0=\varnothing,
	\]
	so that $|\delta_r|=T_r$, $\ell(\delta_r)=r$ and we call $r$ the level of this staircase. 
	
	If $d$ is the size of the Durfee square of $\lambda=(\lambda_1, \lambda_2, \ldots, \lambda_k)$, that is, $d=\max\{i\geq 1 \colon \lambda_i\geq i\}$,   and $\lambda'$ is its
	conjugate, then the Frobenius symbol of $\lambda$ is given by \cite{AndrewsPartitions}
	\[
	\lambda\longleftrightarrow
	\begin{pmatrix}
		a_1&a_2&\cdots&a_d\\
		b_1&b_2&\cdots&b_d
	\end{pmatrix},
	\]
	where $ a_i=\lambda_i-i$ and $b_i=\lambda'_i-i$ for $1\leq i \leq d$. The two rows are strictly decreasing sequences of nonnegative integers, which represent  the Frobenius arms and legs, respectively, such that 
	\begin{equation}\label{Frobeniusc}
		|\lambda|=d+\sum_{i=1}^d(a_i+b_i),
		\qquad	\ell(\lambda)=b_1+1\quad(\lambda\ne\varnothing).
	\end{equation}
	Denote  $\operatorname{Top}(\lambda)=\{a_1,\ldots,a_d\}$ and
	$\operatorname{Top}(\varnothing)=\varnothing$.	See Figure \ref{fig:frobenius} for an example.
	
	\begin{figure}[h]
		\centering
		\begin{minipage}[c]{.34\textwidth}
			\centering
			\begin{tikzpicture}[x=.40cm,y=-.40cm]
				\foreach \rr/\len in {1/7,2/5,3/4,4/2,5/1}{
					\foreach \cc in {1,...,\len}{
						\ifnum\rr=\cc
						\filldraw[fill=black!30,draw=black,line width=.3pt]
						(\cc-1,\rr-1) rectangle (\cc,\rr);
						\else
						\ifnum\cc>\rr
						\filldraw[fill=black!12,draw=black,line width=.3pt]
						(\cc-1,\rr-1) rectangle (\cc,\rr);
						\else
						\filldraw[fill=black!3,draw=black,line width=.3pt]
						(\cc-1,\rr-1) rectangle (\cc,\rr);
						\fi
						\fi
					}
				}
				\draw[very thick] (0,0) rectangle (3,3);
			\end{tikzpicture}
			
			\smallskip
			$\lambda=(7,5,4,2,1)$, \qquad $d=3$
		\end{minipage}
		\hfill
		\begin{minipage}[c]{.57\textwidth}
			\centering
			\[
			a_i=\lambda_i-i,\qquad b_i=\lambda_i'-i,
			\]
			\[
			\lambda\longleftrightarrow
			\begin{pmatrix}6&3&1\\4&2&0\end{pmatrix}.
			\]
			\small
			The cells  enclosed by the black border form the Durfee square.  The lightly shaded cells  give the Frobenius arms 6, 3, 1; the cells
			below the diagonal  give the legs 4, 2, 0.
		\end{minipage}
		\caption{The Young diagram of a partition and its Frobenius symbol.}
		\label{fig:frobenius}
	\end{figure}

	For $\lambda\in\mathcal P$, define the ordinary and shifted minimal excludants by
	\[
	\operatorname{mex}(\lambda)=\min\{t\ge1:m_t(\lambda)=0\},\qquad
	\operatorname{mex}_{r}(\lambda)=\min\{t>r:m_t(\lambda)=0\}.
	\]
	For example, if $\lambda=(7,5,4,2,1)$, then $\operatorname{mex}_1(\lambda)=\operatorname{mex}_2(\lambda)=3$.
	
	Recall that 
	\[
	\mathcal M_r=\{\lambda\in\mathcal P:\operatorname{mex}_{r}(\lambda)-r\text{ is odd}\},
	\qquad
	\mathcal F_r=\{\lambda\in\mathcal P:r\notin\operatorname{Top}(\lambda)\}.
	\]
	The condition defining $\mathcal F_r$ concerns a Frobenius arm, not the presence
	of $r$ as an ordinary part. For example,  $\lambda=(7,5,4,2,1)$, then
	$\operatorname{Top}(7,5,4,2,1)=(6,3,1)$, hence it lies in $\mathcal F_r$ precisely when
	$r\notin\{1,3,6\}$.

	Assume that for a partition with two colors $A$ and $B$, an \( A \)-colored part of size \( i \), denoted by \( i^A \), has weight \( q^i z \), whereas a \( B \)-colored part of size \( i \), denoted by \( i^B \), has weight \( q^i z^{-1} \). Note that 
	the superscripts $A$ and $B$ are just color labels, not powers. Thus a pair \( (\alpha, \beta) \in \mathcal{P}^2 \), where \( \alpha \) records the \( A \)-colored parts and \( \beta \)  records the \( B \)-colored parts, has weight
	\[
	\operatorname{wt}(\alpha, \beta) = q^{|\alpha| + |\beta|} z^{\ell(\alpha) - \ell(\beta)}.
	\]
	Consequently,
	\begin{equation}\label{wtab}
		\sum_{\alpha, \beta \in \mathcal{P}} \operatorname{wt}(\alpha, \beta) = \frac{1}{(zq, q/z; q)_\infty}.
	\end{equation}
	We also set the weight of an uncolored atom $U$  to be $zq^0=z$.
	For example,  for a two colored partition pair $(\alpha=(5,2,2), \beta=(4,3,1))$, we have $\operatorname{wt}(\alpha,\beta)=q^{17}z^{3-3}=q^{17}$.

	\section{Preliminary cancellations and the invariant subset decomposition}\label{chap:map}
	
	We first interpret  $J_k(z,q)$, both combinatorially and analytically. 	
	Since
	\[
	\chi_j(z)=z^{-j}(1+z+\cdots+z^{2j}),
	\]
	equation \eqref{eq:J-def} can be rewritten as
	\[
	J_k(z,q)=\sum_{j=k}^{\infty}(-1)^{j-k}
	\frac{q^{T_j}z^{-j}(1+z+\cdots+z^{2j})}
	{(zq,q/z;q)_\infty},
	\]
	where the factor \(q^{T_j}z^{-j}\) can be seen as the weight of a \(B\)-colored staircase
	\[
	\delta_j=(j,j-1,\ldots,1).
	\]
	Let \(\mathcal X_k\) be the set of quadruples
	\[
	\mathcal{X}_k=\{y=(j,m,\alpha,\beta)\colon  \alpha, \beta \in \mathcal{P}, \ j\ge k,\  0\le m\le 2j,\   {\rm mex}(\beta)>j\},
	\]
	where $\alpha, \beta$ are colored by $A$ and $B$, respectively. Define
	\[
	\operatorname{sgn}(y)=(-1)^{j-k},
	\qquad
	\operatorname{wt}(y)
	=q^{|\alpha|+|\beta| }
	z^{m+\ell(\alpha)-\ell(\beta)  }.
	\]
	Here \(m\) records the choice of the monomial \(z^m\) in
	\(1+z+\cdots+z^{2j}\), or equivalently \(m\) copies of the
	atom \(U\).  Combining with \eqref{wtab}, it is easy to see that 
	\[
	J_k(z,q)
	=
	\sum_{y\in\mathcal X_k}
	\operatorname{sgn}(y)\operatorname{wt}(y).
	\]
	
	For \(\beta\in\mathcal P\), let
	\[
	h(\beta)
	:=
	\max\{t\ge0:\delta_t\subseteq\beta\}
	=
	\operatorname{mex}(\beta)-1,
	\]
	where the staircase containment is understood with multiplicity. For \(m\ge0\), set
	\[
	L(m)
	:=
	\max\left\{k,\left\lceil\frac m2\right\rceil\right\}.
	\]
	Once \(m,\alpha,\beta\) are given, the admissible values of \(j\)
	are precisely the integers in the interval
	\[
	L(m)\le j\le h(\beta).
	\]
	
	We now define a map
	\begin{align*}
		\iota:&\mathcal X_k\longrightarrow\mathcal X_k      
	\end{align*}
	by 
	\[
	\iota(j,m,\alpha,\beta)
	=
	\begin{cases}
		(j+1,m,\alpha,\beta),
		& j-L\ \text{is even and }j<h,\\[2mm]
		(j-1,m,\alpha,\beta),
		& j-L\ \text{is odd},\\[2mm]
		(j,m,\alpha,\beta),
		& j=h\ \text{and }h-L\ \text{is even},
	\end{cases}
	\]
	where \(y=(j,m,\alpha,\beta)\in \mathcal{\chi}_k\), and 
	$L=L(m), h=h(\beta)$.
	
	\begin{prop}
		The map \(\iota\) is an involution, and the invariant subset of this involution is 
		\[
		\mathcal I_{\iota}
		=
		\left\{
		(j,m,\alpha,\beta)\in\mathcal{\chi}_k:
		j=h(\beta),\ \operatorname{mex}(\beta)=L(m)+2d+1
		\text{ for some }d\ge0
		\right\}.
		\]
		
	\end{prop}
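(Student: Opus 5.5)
The proof rests on one observation: $\iota$ changes only the coordinate $j$, and $L=L(m)$, $h=h(\beta)$ depend only on $(m,\alpha,\beta)$. So I fix $(m,\alpha,\beta)$ and study $\iota$ on the fibre
\[
\{(j,m,\alpha,\beta): L\le j\le h\}.
\]
The fibre is nonempty exactly when $L\le h$, and then it is a finite interval of integers. This description of the admissible $j$ was recorded just before the definition of $\iota$. It uses two facts: the condition $0\le m\le 2j$ is equivalent to $j\ge\lceil m/2\rceil$, and $\operatorname{mex}(\beta)>j$ is equivalent to $j\le h(\beta)$.

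On such an interval $[L,h]$ the map $\iota$ is the standard pairing $L\leftrightarrow L+1$, $L+2\leftrightarrow L+3$, and so on.

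\textbf{Step 1: the three cases cover every element and $\iota$ stays inside $\mathcal X_k$.}
\begin{itemize}
\item If $j-L$ is even and $j<h$, then $j+1\le h$ and $j+1>L$, so $(j+1,m,\alpha,\beta)$ is admissible.
\item If $j-L$ is odd, then $j\ge L+1$, so $j-1\ge L$ is admissible.
\item The remaining possibility is $j-L$ even with $j=h$, which is exactly the third case.
\end{itemize}
So $\iota$ is well defined. The sign changes in the first two cases, which is the sign-reversing property, though it is not needed here.

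\textbf{Step 2: $\iota^2=\mathrm{id}$.}
\begin{itemize}
\item If $j-L$ is even and $j<h$, then $\iota(y)$ has $j'=j+1$ with $j'-L$ odd, so $\iota$ sends it back to $j$.
\item If $j-L$ is odd, then $j'=j-1$ has $j'-L$ even. Also $j'=j-1<j\le h$, so $j'<h$ and we are in the first case, which sends $j'$ back to $j$.
\item Fixed points trivially satisfy $\iota^2(y)=y$.
\end{itemize}

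\textbf{Step 3: the fixed points.} The first two cases move $j$, so $y$ is fixed exactly when $j=h$ and $h-L$ is even, with $L\le h$ so that the fibre is nonempty. Writing $h-L=2d$ with $d\ge0$ and using $h(\beta)=\operatorname{mex}(\beta)-1$ gives $\operatorname{mex}(\beta)=L(m)+2d+1$. Conversely, if $j=h(\beta)$ and $\operatorname{mex}(\beta)=L(m)+2d+1$ with $d\ge 0$, then $h-L=2d$ is even and $j\ge L$. Since the element already lies in $\mathcal X_k$, it is a fixed point.

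\textbf{Where care is needed.} The only delicate step is the identity $h(\beta)=\operatorname{mex}(\beta)-1$, where staircase containment is understood with multiplicity. The staircase $\delta_t$ is contained in $\beta$ exactly when each of the parts $1,\dots,t$ appears in $\beta$, which is equivalent to $\operatorname{mex}(\beta)>t$. Taking the largest such $t$ gives $h(\beta)=\operatorname{mex}(\beta)-1$. Everything else is bookkeeping of parities.
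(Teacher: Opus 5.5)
Your proof is correct and follows essentially the same route as the paper. You fix $(m,\alpha,\beta)$, check that the shifted level $j\pm1$ stays in $[L(m),h(\beta)]$, use the parity of $j-L$ to get $\iota^2=\mathrm{id}$, and read off the fixed points via $h(\beta)=\operatorname{mex}(\beta)-1$; your explicit check that $j-1<h$ in the odd case, so the first branch of $\iota$ applies on the way back, fills in a step the paper leaves implicit.
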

	
	\begin{proof}
		In the first case,
		\(j<h\) implies
		\[
		\delta_{j+1}\subseteq\beta.
		\]
		In the second case, \(j-L\) is odd, so \(j\ge L+1\). Hence
		\[
		j-1\ge L\ge k,\qquad \delta_{j-1}\subseteq \beta \qquad
		\text{and}\qquad
		m\le 2L\le 2(j-1).
		\]	
		
		For any element in the first case, 
		the staircase under $\iota$ is mapped to of level \(j+1\) which has odd difference from \(L\), so applying $\iota$  again simply maps it back to itself at level $j$. Conversely, for any element in the second
		case, applying $\iota$ twice clearly maps it back to itself at level $j$. The elements in the third case consist  precisely of the fixed points of $\iota$. Moreover, since \(\iota\) changes only \(j\), it preserves
		\(\operatorname{wt}(y)\). On every nonfixed orbit, \(j\) is changed by
		one, and therefore
		\[
		\operatorname{sgn}(\iota(y))
		=
		-\operatorname{sgn}(y).
		\]
		Thus \(\iota\) is a weight-preserving sign-reversing involution.
		
		Note that an element \(y=(j,m,\alpha,\beta)\) is fixed under \(\iota\) if and only if
		\[
		j=h(\beta),
		\qquad
		h(\beta)-L(m)\equiv 0 \pmod2,
		\]
		and $\operatorname{sgn}(y)	=(-1)^{L(m)-k}$.
		Equivalently, there exists \(d\ge0\) such that
		\[
		\operatorname{mex}(\beta)
		=
		L(m)+2d+1,
		\]
		which completes the proof. 
	\end{proof}
	
	For \(r\ge1\), define the invariant subset according to the mex
	\begin{equation}\label{invariantab}
		\mathcal C_r
		=
		\left\{
		(\alpha,\beta)\in\mathcal P^2:
		\operatorname{mex}(\beta)=r+2d+1
		\text{ for some }d\ge0
		\right\},
	\end{equation}
	and let
	\[
	C_r(z,q)
	=
	\sum_{(\alpha,\beta)\in\mathcal C_r}
	q^{|\alpha|+|\beta|}
	z^{\ell(\alpha)-\ell(\beta)}.
	\]
	By applying the involution $\iota$, we obtain the following decomposition of $J_k(z,q)$ according to the invariant subset of $\iota$. Here we present both combinatorial proof and anlaytic proof of such a decomposition. 
	
	\begin{lemma}[Invariant subset decomposition of $J_k(z,q)$] \label{lem:block}
		For every \(k\ge1\),
		\begin{align}
			J_k(z,q)
			={}&(1+z+\cdots+z^{2k})C_k(z,q)\notag\\
			&+\sum_{r=k+1}^{\infty}
			(-1)^{r-k}z^{2r-1}(1+z)C_r(z,q).
			\label{eq:com-decomposition}
		\end{align}
	\end{lemma}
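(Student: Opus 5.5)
We start from the involution $\iota$ and the Proposition: since $\iota$ preserves $\operatorname{wt}$ and reverses sign off its fixed points, $J_k(z,q)$ equals the signed weight of $\mathcal I_\iota$. A fixed point is $(j,m,\alpha,\beta)$ with $j=h(\beta)$ (so $j$ is determined by $\beta$), $\operatorname{mex}(\beta)=L(m)+2d+1$, and sign $(-1)^{L(m)-k}$. The constraint $j\ge L(m)$ holds automatically because $h(\beta)=L(m)+2d\ge L(m)$. Also $m\le 2j$ is automatic, since $m\le 2L(m)\le 2j$. Hence we may sum freely over $m\ge 0$ and over $(\alpha,\beta)\in\mathcal C_{L(m)}$, giving
\[
J_k(z,q)=\sum_{m\ge0}(-1)^{L(m)-k}z^{m}C_{L(m)}(z,q).
\]

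Next we group the values of $m$ by $r=L(m)=\max\{k,\lceil m/2\rceil\}$.
\begin{itemize}
\item For $r=k$, the set of $m$ with $L(m)=k$ is exactly $\{0,1,\ldots,2k\}$, since $\lceil m/2\rceil\le k$ if and only if $m\le 2k$. This gives the term $(1+z+\cdots+z^{2k})C_k(z,q)$ with sign $+1$.
\item For $r\ge k+1$, $L(m)=r$ if and only if $\lceil m/2\rceil=r$, that is, $m\in\{2r-1,2r\}$. This contributes $(-1)^{r-k}(z^{2r-1}+z^{2r})C_r=(-1)^{r-k}z^{2r-1}(1+z)C_r$.
\end{itemize}
Summing over $r$ yields \eqref{eq:com-decomposition}.

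All sums are legitimate in $\mathscr R$. For a fixed bidegree $(n,s)$, $(\alpha,\beta)\in\mathcal C_r$ forces $\delta_{r}\subseteq\beta$, so $|\beta|\ge T_r$. Thus only finitely many $r$ (and hence finitely many $m$) contribute to each coefficient, and rearranging the sum $\sum_{y\in\mathcal X_k}$ into fixed points plus cancelling pairs is valid coefficientwise.

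The main point needing care is the bookkeeping in the first step: checking that the fixed-point set projects bijectively onto $\{(m,\alpha,\beta): (\alpha,\beta)\in\mathcal C_{L(m)}\}$ with no extra constraints, including the edge case $d=0$ where $j=L(m)$. The regrouping by $r$ is then routine.

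As an analytic cross-check, one can also verify directly that $\sum_{j\ge L}(-1)^{j-L}$ restricted to $j\le h(\beta)$ equals $1$ or $0$ according to the parity of $h(\beta)-L$. This recovers the same formula by summing over $j$ first in $\sum_{j\ge k}(-1)^{j-k}q^{T_j}\chi_j(z)/(zq,q/z;q)_\infty$.
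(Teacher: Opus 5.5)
Your proposal is correct and takes essentially the same route as the paper's combinatorial proof: you pass to the fixed points of $\iota$, note that $j=h(\beta)$ is forced, and regroup the values of $m$ by $r=L(m)$, with $m\in\{0,\dots,2k\}$ for $r=k$ and $m\in\{2r-1,2r\}$ for $r>k$. Your explicit checks that $j\ge L(m)$ and $m\le 2j$ hold automatically, and your finiteness remark justifying the coefficientwise rearrangement, supply details the paper leaves implicit.
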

	
	\begin{proof}[Combinatorial proof.]
		Under the involution \(\iota\), only its fixed points remain. Set \(r=L(m)\).
		
		If \(r=k\), then \(0\le m\le2k\), then every surviving element has
		positive sign, and the coresponding  \((\alpha,\beta)\) belongs to
		\(\mathcal C_k\). Summing over the possible values of \(m\) gives
		\[
		(1+z+\cdots+z^{2k})C_k(z,q).
		\]
		
		If \(r>k\), then \(L(m)=\lceil m/2\rceil=r\), and hence
		\[
		m\in\{2r-1,2r\}.
		\]
		The sign of the fixed point is \((-1)^{r-k}\), while the two possible
		values of \(m\) contribute to
		\[
		z^{2r-1}+z^{2r}
		=
		z^{2r-1}(1+z).
		\]
		Summing over all \(r\ge k+1\) gives
		\eqref{eq:com-decomposition}. Conversely, every \( (\alpha, \beta) \in \mathcal{C}_r \), together with an admissible value of \( m \) described above,
		determines the unique fixed point \( j = h(\beta) \).
	\end{proof}

	\begin{proof}[Analytic proof.]
		Formula  \eqref{eq:com-decomposition} also follows from \eqref{eq:J-def}.  Let
		\[
		F_j(z,q)=\frac{q^{T_j}z^{-j}}{(zq,q/z;q)_\infty},
		\]
		and let
		\[
		A_j(z)=1+z+\cdots+z^{2j},\qquad
		D_r(z)=z^{2r-1}(1+z).
		\]
		Then
		\[
		J_k(z,q)=\sum_{j=k}^{\infty}(-1)^{j-k}A_j(z)F_j(z,q),
		\qquad
		A_j(z)=A_k(z)+\sum_{r=k+1}^{j}D_r(z).
		\]
		Collecting equal blocks in the region $k+1\le r\le j$ gives
		\[
		\begin{aligned}
			J_k(z,q)
			= {}&A_k(z)\sum_{j=k}^{\infty}(-1)^{j-k}F_j(z,q)+\sum_{r=k+1}^{\infty}D_r(z)
			\sum_{j=r}^{\infty}(-1)^{j-k}F_j(z,q)\\
			={}&A_k(z)  C_k(z,q)
			+\sum_{r=k+1}^{\infty}(-1)^{r-k}D_r(z)\widehat C_r(z,q),
		\end{aligned}
		\]
		where 
		\[
		\widehat C_r(z,q)=\sum_{j=r}^{\infty}(-1)^{j-r}F_j(z,q).
		\]
		Pairing up the consecutive summands, we have 
		\[
		\widehat C_r(z,q)=\sum_{d\ge0}
		\bigl(F_{r+2d}(z,q)-F_{r+2d+1}(z,q)\bigr).
		\]
		Noting that $T_{j+1}-T_j=j+1$, it implies that 
		\[
		\begin{aligned}
			F_j(z,q)-F_{j+1}(z,q)
			&=\frac{q^{T_j}z^{-j}(1-q^{j+1}z^{-1})}{(zq,q/z;q)_\infty}\\
			&=\frac1{(zq;q)_\infty}
			\left(\prod_{i=1}^{j}\frac{q^iz^{-1}}{1-q^iz^{-1}}\right)
			\left(\prod_{i=j+2}^{\infty}\frac1{1-q^iz^{-1}}\right).
		\end{aligned}
		\]
		The first factor $1/(zq;q)_\infty$ permits the generating function of  arbitrary $A$-colored partitions.  According to the following two brackets, the $B$-colored partitions must have parts 
		$1, 2, \ldots,j$ occurring at least once, $j+1$ being absent, and all larger
		parts being unrestricted.  Thus $F_j(z,q)-F_{j+1}(z,q)$  is the generating function
		for pairs $(\alpha, \beta)$ with ${\rm{mex}}(\beta)=j+1$, that is to say, $j+1$  is the first missing $B$–colored part.  Taking $j=r+2d$ shows that 
		$\widehat C_r(z,q)=C_r(z,q)$, and this proves
		\eqref{eq:com-decomposition}. 
	\end{proof}

	\section{Frobenius operation and the combinatorial proof}\label{chap:first}
	
	In this section, we will construct a Frobenius-arm lift operator as well as an injection which can be combined with  Konan's  bijection to build an injection from the invariant subset $\mathcal C_r$ to $\mathcal C_{r-1}$.

	Taking $(\alpha,\beta)\in\mathcal C_r$,  since $\operatorname{mex}(\beta)>r$, by removing one copy of each
	part among $1,2,\ldots,r$, it yields  a unique decomposition for $\beta$:
	\[
	\beta=\delta_r\sqcup\gamma,
	\]
	such that 
	$\operatorname{mex}_{r}(\gamma)=\operatorname{mex}(\beta)=r+2d+1$, and hence $\gamma\in\mathcal M_r$.
	Conversely, adjoining $\delta_r$ to any $\gamma\in\mathcal M_r$ produces a
	$B$-colored partition in the invariant subset $\mathcal C_r$.
	
	To give the combinatorial proof of our main result, we need to introduce the a Frobenius operation associated with  the following sets 
	\[
	D_r:=\{\lambda\in\mathcal F_r:r-1\in \operatorname{Top}(\lambda)\},
	\qquad
	E_r:=\{\mu\in\mathcal F_{r-1}:r\in \operatorname{Top}(\mu)\}.
	\]
	\begin{lemma}\label{lem:Ur}
		For a given $r\ge2$, there exists a bijection
		\[
		\mathsf U_r:D_r\longrightarrow E_r,
		\]
		where for $\lambda\in D_r$,  $\mathsf U_r(\lambda)$ is obtained by replacing the unique top-row entry $r-1$ with $r$ and
		leaving the bottom row unchanged.
	\end{lemma}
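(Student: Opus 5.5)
Recall that a Frobenius symbol is any pair of strictly decreasing sequences of nonnegative integers of equal length $d$. Conversely, every such pair determines a unique partition. The plan is to check that $\mathsf U_r$ is well defined, that its image lies in $E_r$, and that the inverse replacement $r\mapsto r-1$ maps $E_r$ back into $D_r$.

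First, well-definedness. Let $\lambda\in D_r$ with top row $a_1>\cdots>a_d$. Since the top row is strictly decreasing, the value $r-1$ occurs exactly once, say $a_i=r-1$. Because $\lambda\in\mathcal F_r$, the value $r$ does not occur in the top row, so $a_{i-1}\ge r+1$ (or $i=1$). Replacing $a_i$ by $r$ then gives
\[
a_{i-1}>r>r-1>a_{i+1},
\]
so the new top row is still strictly decreasing. The bottom row is untouched and still has length $d$. Hence the new symbol is a valid Frobenius symbol and defines a unique partition $\mu=\mathsf U_r(\lambda)$.

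Second, membership in $E_r$. The top row of $\mu$ contains $r$. It does not contain $r-1$: the old entry $r-1$ was replaced, and because the row is strictly decreasing no other entry equalled $r-1$. So $\mu\in\mathcal F_{r-1}$ and $r\in\operatorname{Top}(\mu)$, that is, $\mu\in E_r$.

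Third, the inverse. Define $\mathsf V_r:E_r\to D_r$ by replacing the unique top entry $r$ with $r-1$. Let $\mu\in E_r$ with $a_i=r$. Since $r-1\notin\operatorname{Top}(\mu)$, we have $a_{i+1}\le r-2$ (or $i=d$). Here $r\ge2$ is used: it guarantees $r-1\ge1>0$, so the new entry is a nonnegative integer above every later entry and the sequence stays strictly decreasing. By the symmetric argument, the result contains $r-1$ but not $r$, so it lies in $D_r$.

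The two maps only swap the entries $r-1$ and $r$ in one position of the top row, so they are clearly mutually inverse. Therefore $\mathsf U_r$ is a bijection. For later use, note from \eqref{Frobeniusc} that $\mathsf U_r$ increases size by exactly $1$ and preserves length, since $b_1$ is unchanged. The only delicate point in the whole argument is the strict-monotonicity check, which is exactly where the conditions $r\notin\operatorname{Top}(\lambda)$ and $r-1\notin\operatorname{Top}(\mu)$ enter.
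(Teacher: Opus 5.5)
Your proof is correct and is essentially the paper's argument: both check that replacing the unique top-row entry $r-1$ by $r$ keeps the row strictly decreasing because $r\notin\operatorname{Top}(\lambda)$. Both then take the lowering $r\mapsto r-1$ on $E_r$, which is well defined because $r-1\notin\operatorname{Top}(\mu)$, as the inverse map.

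One small quibble about your remark on $r\ge2$. The lowered entry $r-1$ is nonnegative as soon as $r\ge1$, and it sits above the later entries because $a_{i+1}\le r-2$. So attributing this step to $r\ge2$ is misplaced, though harmless.
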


	\begin{proof} For a partition $\lambda \in D_r$, suppose its Frobenius symbol is as follows 
		\[
		\lambda=
		\begin{pmatrix}
			a_1&\cdots&a_d\\
			b_1&\cdots&b_d
		\end{pmatrix},
		\]
		and let $a_t=r-1$. Since $\lambda\in\mathcal F_r$, the integer $r$
		does not occur in the top row. Hence
		\[
		a_{t-1}\ge r+1\quad(t>1),
		\qquad
		a_{t+1}\le r-2\quad(t<d).
		\]
		Thus replacing $a_t=r-1$ with $r$ preserves the strictly decreasing order of
		the top row, and thereby  
		$\mathsf U_r(\lambda)\in E_r$.
		
		Conversely, if $\mu\in E_r$, then its top row contains a unique entry
		$r$ and contains no entry $r-1$. Replacing that entry $r$ with $r-1$
		again preserves the strictly decreasing order and produces a partition in $D_r$.
		This lowering operation is the inverse of $\mathsf U_r$. Thus
		$\mathsf U_r $	is a bijection.
	\end{proof}
	
	We call $\mathsf U_r$ the Frobenius-arm lift operator since it  extends one of the arms of $\lambda$ by 1.  From the properties of the Frobenius symbol \eqref{Frobeniusc},
	it is clear that 
	\[
	|\mathsf U_r(\lambda)|=|\lambda|+1,
	\qquad
	\ell(\mathsf U_r(\lambda))=\ell(\lambda).
	\]
	Figure \ref{fig:arm-lift} illustrates this operation.
	\begin{figure}[h]
		\centering
		\resizebox{.74\textwidth}{!}{%
			\begin{tikzpicture}[x=.38cm,y=-.38cm,>=Latex]
				\begin{scope}
					\foreach \rr/\len in {1/7,2/5,3/4,4/2,5/1}{
						\foreach \cc in {1,...,\len}{
							\filldraw[fill=black!4,draw=black,line width=.28pt]
							(\cc-1,\rr-1) rectangle (\cc,\rr);
						}
					}
					\node at (3.5,-.72) {$(7,5,4,2,1)$};
					\node at (3.5,5.62) {$\operatorname{Top}=\{6,3,1\}$};
				\end{scope}
				\draw[->,very thick] (8.0,2) -- (16.8,2)
				node[midway,above,align=center] {$\mathsf U_4$:\\$3\mapsto4$};
				\begin{scope}[xshift=8cm]
					\foreach \rr/\len in {1/7,2/6,3/4,4/2,5/1}{
						\foreach \cc in {1,...,\len}{
							\ifnum\rr=2
							\ifnum\cc=6
							\filldraw[fill=black!30,draw=black,line width=.28pt]
							(\cc-1,\rr-1) rectangle (\cc,\rr);
							\else
							\filldraw[fill=black!4,draw=black,line width=.28pt]
							(\cc-1,\rr-1) rectangle (\cc,\rr);
							\fi
							\else
							\filldraw[fill=black!4,draw=black,line width=.28pt]
							(\cc-1,\rr-1) rectangle (\cc,\rr);
							\fi
						}
					}
					\node at (3.5,-.72) {$(7,6,4,2,1)$};
					\node at (3.5,5.62) {$\operatorname{Top}=\{6,4,1\}$};
				\end{scope}
		\end{tikzpicture}}
		\caption{The Frobenius-arm lift $\mathsf U_4$.  One cell is added to the arm
			corresponding to the top-row entry $3$; the bottom row and the partition
			length are unchanged.}
		\label{fig:arm-lift}
	\end{figure}
	
	Let $\widehat{r}$ denote a  marked $A$-colored part of size $r$. 
	The next lemma  establishes an injection.
	
	\begin{lemma}\label{lem:B-map}
		The map
		\[
		\mathsf B_r:\mathcal P\times\{\widehat{r}\}\times\mathcal F_r
		\hookrightarrow \mathcal P\times\mathcal F_{r-1}
		\]
		with 
		\[
		\mathsf B_r(\alpha,\widehat{r},\lambda)=
		\begin{cases}
			(\alpha\sqcup(r),\lambda),&\mbox{if} \ r-1\notin\operatorname{Top}(\lambda),\\[1mm]
			(\alpha\sqcup(r-1),\mathsf U_r(\lambda)),
			& \mbox{if}\ r-1\in\operatorname{Top}(\lambda) 
		\end{cases}
		\]
		is an injection for $r\geq 2$.  More precisely, it is a bijection onto the disjoint union
		$\mathcal I_r=\mathcal I_r^{(0)}\sqcup\mathcal I_r^{(1)}\subset\mathcal P\times\mathcal F_{r-1}$, where
		\[
		\begin{aligned}
			\mathcal I_r^{(0)}&=\{(\alpha',\mu):
			r\notin\operatorname{Top}(\mu),\ m_r(\alpha')\ge1\},\\
			\mathcal I_r^{(1)}&=\{(\alpha',\mu):
			r\in\operatorname{Top}(\mu),\ m_{r-1}(\alpha')\ge1\},
		\end{aligned}
		\]
		and in both branches $		
		|\alpha'|+|\mu|=|\alpha|+r+|\lambda|$, $	\ell(\alpha')=\ell(\alpha)+1$, and $		\ell(\mu)=\ell(\lambda)$. 
	\end{lemma}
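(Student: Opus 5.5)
The plan is to check directly that $\mathsf B_r$ is well defined, that it lands in $\mathcal I_r^{(0)}\sqcup\mathcal I_r^{(1)}$, and that it has an explicit inverse on this union. The weight statements then follow from \eqref{Frobeniusc} and the properties of $\mathsf U_r$ recorded after Lemma \ref{lem:Ur}.

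\emph{Well-definedness and image.} Take $(\alpha,\widehat r,\lambda)$ with $\lambda\in\mathcal F_r$, so $r\notin\operatorname{Top}(\lambda)$. I split into the two branches of the definition.
\begin{itemize}
\item If $r-1\notin\operatorname{Top}(\lambda)$, then $\lambda\in\mathcal F_{r-1}$ with $r\notin\operatorname{Top}(\lambda)$. Also $\alpha'=\alpha\sqcup(r)$ has $m_r(\alpha')\ge1$, so the image lies in $\mathcal I_r^{(0)}$.
\item If $r-1\in\operatorname{Top}(\lambda)$, then $\lambda\in D_r$. By Lemma \ref{lem:Ur}, $\mu=\mathsf U_r(\lambda)\in E_r\subset\mathcal F_{r-1}$ and $r\in\operatorname{Top}(\mu)$. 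Also $\alpha'=\alpha\sqcup(r-1)$ has $m_{r-1}(\alpha')\ge1$, where $r\ge2$ guarantees $r-1$ is a genuine positive part. So the image lies in $\mathcal I_r^{(1)}$.
\end{itemize}
The two sets are disjoint because they are distinguished by whether $r\in\operatorname{Top}(\mu)$.

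\emph{Weights.} In the first branch, size increases by $r$, $\ell(\alpha)$ increases by one, and $\lambda$ is unchanged. In the second branch, $|\alpha|$ increases by $r-1$ and $|\mathsf U_r(\lambda)|=|\lambda|+1$, so the total size again increases by $r$. The length $\ell(\mathsf U_r(\lambda))=\ell(\lambda)$ is preserved because the bottom row, and hence $b_1$, is untouched.

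\emph{Inverse.} Given $(\alpha',\mu)\in\mathcal I_r$, I define the inverse by cases.
\begin{itemize}
\item If $r\notin\operatorname{Top}(\mu)$, remove one part $r$ from $\alpha'$ and keep $\mu$. Since $\mu\in\mathcal F_{r-1}$, we have $r-1\notin\operatorname{Top}(\mu)$, and together with $r\notin\operatorname{Top}(\mu)$ this gives $\mu\in\mathcal F_r$ landing in the first branch.
\item If $r\in\operatorname{Top}(\mu)$, then $\mu\in E_r$. Remove one part $r-1$ from $\alpha'$ and apply $\mathsf U_r^{-1}$, which gives an element of $D_r\subset\mathcal F_r$ in the second branch.
\end{itemize}
These maps compose to the identity in both directions, since branch membership is detected by $r-1\in\operatorname{Top}(\lambda)$ on one side and by $r\in\operatorname{Top}(\mu)$ on the other, and $\mathsf U_r$ exchanges these conditions. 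Injectivity follows.

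The only delicate point is the bookkeeping that $\mathcal F_{r-1}$ forces $r-1\notin\operatorname{Top}(\mu)$, so that the case split on the image side exactly mirrors the one on the domain side. Everything else is routine given Lemma \ref{lem:Ur}.
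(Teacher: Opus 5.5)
Your proposal is correct and follows essentially the same route as the paper. You check that each branch lands in $\mathcal I_r^{(0)}$ or $\mathcal I_r^{(1)}$ via Lemma~\ref{lem:Ur}, then invert by cases on whether $r\in\operatorname{Top}(\mu)$, removing $r$ or $r-1$ from $\alpha'$ and applying $\mathsf U_r^{-1}$ in the latter case. You are somewhat more explicit than the paper, since you verify both compositions and spell out the size bookkeeping $(r-1)+1=r$ in the second branch, but the argument is the same.
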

	
	\begin{proof}
		It is obvious that $\mathcal I_r^{(0)}\cap \mathcal I_r^{(1)}=\emptyset$. The first branch of $\mathsf B_r(\alpha,\widehat{r},\lambda)$ lands in $\mathcal I_r^{(0)}$: the output top row still omits       
		$r$, and the newly inserted part $r$ ensures that $m_r(\alpha')\geq1$.  The second
		branch lands in $\mathcal I_r^{(1)}$: the arm lift operator puts $r$ into the output top row
		and $\alpha'$ contains the newly inserted part $r-1$.  
		
		Conversely, for
		$(\alpha',\mu)\in\mathcal I_r$, set
		\[
		(\alpha,\widehat{r},\lambda)=
		\begin{cases}
			(\alpha'\setminus(r),\widehat{r},\mu),
			&r\notin\operatorname{Top}(\mu),\\[1mm]
			(\alpha'\setminus(r-1),\widehat{r},\mathsf U_r^{-1}(\mu)),
			&r\in\operatorname{Top}(\mu).
		\end{cases}
		\]
		Here $\alpha'\setminus(t)$ denotes the partition obtained by removing one copy of the part $t$.
		In the first case, $(\alpha',\mu)\in\mathcal I_r^{(0)}$ thus $\mu\in\mathcal F_{r-1}$ and its top row also omits $r$. So we have $\lambda=\mu\in\mathcal F_r$ and thereby $(\alpha,\widehat{r},\lambda) \in \mathcal P\times\{\widehat{r}\}\times\mathcal F_r$.  In the second case, it is also true since  $\lambda=\mathsf U_r^{-1}(\mu)$ is again an element of $\mathcal F_r$.  Therefore, $\mathsf B_r$ is a bijection onto the set $\mathcal I_r$.
		
		We can see that the size and length statements are obvious from the definition of $\mathsf U_r$. Thus, the proof is complete.
	\end{proof}
	
	To show the map $\mathsf B_r$  is not onto   $\mathcal P\times\mathcal F_{r-1}$, we consider 
	$(\varnothing,\varnothing)$ which lies outside its image.

	Empolying the maps $\mathsf U_r$, $\mathsf B_r$,  and Konan's bijection, we can finish  the proof of our main result \eqref{eqmain}.
	
	\begin{theorem}[Invariant subset injection]\label{thm:core-injection}
		For every  $r\ge2$,  there is an injection
		\begin{align*}
			\Theta_r:& \{(U,U)\}\times\mathcal C_r\hookrightarrow\mathcal C_{r-1}\\
			& (U,U,\alpha,\beta) \mapsto (\alpha',\beta')
		\end{align*}
		where $(\alpha,\beta)\in \mathcal{C}_r$, $
		\beta=\delta_r\sqcup\gamma$, with 
		$ \gamma\in\mathcal M_r$,  and 
		$\lambda=\mathsf K_r(\gamma)\in\mathcal F_r$ is obtained  by using   Konan's bijection as given in Theorem \ref{thm:Konan}, then $\alpha'$ is determined by \[
		\mathsf B_r(\alpha,\widehat{r},\lambda)=(\alpha',\mu),
		\qquad \mu\in\mathcal F_{r-1},
		\]
		$\beta'$ is given by \[
		\beta'=\delta_{r-1}\sqcup\gamma'.
		\] with $\gamma'=\mathsf K_{r-1}^{-1}(\mu)\in\mathcal M_{r-1}$, such that 
		\[
		|\alpha'|+|\beta'|=|\alpha|+|\beta|,
		\qquad
		\ell(\alpha')-\ell(\beta')
		=\ell(\alpha)-\ell(\beta)+2.
		\]
		Consequently,
		\begin{equation}\label{eq:core-coeff-ineq}
			z^2C_r(z,q)\preceq C_{r-1}(z,q).
		\end{equation}
	\end{theorem}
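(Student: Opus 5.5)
The plan is to check that $\Theta_r$ is well defined, injective, and changes size and $z$-degree as claimed. The coefficient inequality \eqref{eq:core-coeff-ineq} then follows, because an injection that matches weights gives a termwise comparison of generating functions. The map $\Theta_r$ is a composite of four maps, each known to be injective: $\beta\mapsto\gamma$, then $\mathsf K_r$, then $\mathsf B_r$, then $\mathsf K_{r-1}^{-1}$ followed by adjoining $\delta_{r-1}$.

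First I treat $\beta\mapsto\gamma$. As observed before Lemma \ref{lem:Ur}, for $(\alpha,\beta)\in\mathcal C_r$ we have $\operatorname{mex}(\beta)>r$, so the decomposition $\beta=\delta_r\sqcup\gamma$ is unique. Moreover $\operatorname{mex}_r(\gamma)=\operatorname{mex}(\beta)=r+2d+1$, so $\gamma\in\mathcal M_r$, and the correspondence $\beta\leftrightarrow\gamma$ is a bijection $\{\beta:(\alpha,\beta)\in\mathcal C_r\}\to\mathcal M_r$. Theorem \ref{thm:Konan} then gives $\lambda=\mathsf K_r(\gamma)\in\mathcal F_r$ with $|\lambda|=|\gamma|$ and $\ell(\lambda)=\ell(\gamma)$. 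Since $r\ge2$, Lemma \ref{lem:B-map} applies to $(\alpha,\widehat r,\lambda)$ and gives an injective output $(\alpha',\mu)\in\mathcal P\times\mathcal F_{r-1}$ with $|\alpha'|+|\mu|=|\alpha|+r+|\lambda|$, $\ell(\alpha')=\ell(\alpha)+1$ and $\ell(\mu)=\ell(\lambda)$. Next I set $\gamma'=\mathsf K_{r-1}^{-1}(\mu)\in\mathcal M_{r-1}$. This needs $r-1\ge1$, which holds; Konan's theorem covers every $r\ge0$. Konan's map again preserves size and length. Finally $\beta'=\delta_{r-1}\sqcup\gamma'$ has $\operatorname{mex}(\beta')=\operatorname{mex}_{r-1}(\gamma')$, which is odd relative to $r-1$, so $(\alpha',\beta')\in\mathcal C_{r-1}$.

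Injectivity holds because every step is injective: $\beta\mapsto\gamma$ is a bijection, $\mathsf K_r$ is a bijection, $\mathsf B_r$ is injective (jointly in $\alpha$ and $\lambda$), and $\mathsf K_{r-1}^{-1}$ and $\gamma'\mapsto\beta'$ are bijections. Hence $(\alpha,\beta)$ is recoverable from $(\alpha',\beta')$.

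Bookkeeping. Sizes satisfy
\[
|\alpha'|+|\beta'|=|\alpha'|+T_{r-1}+|\mu|=|\alpha|+r+|\lambda|+T_{r-1}=|\alpha|+|\gamma|+T_r=|\alpha|+|\beta|,
\]
since $T_r=T_{r-1}+r$. Lengths satisfy $\ell(\beta')=(r-1)+\ell(\gamma')=(r-1)+\ell(\gamma)=\ell(\beta)-1$ and $\ell(\alpha')=\ell(\alpha)+1$, so $\ell(\alpha')-\ell(\beta')=\ell(\alpha)-\ell(\beta)+2$. The two atoms $U$ each carry weight $z$, so the weight of $(U,U,\alpha,\beta)$ is $z^2\operatorname{wt}(\alpha,\beta)$. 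This equals $\operatorname{wt}(\alpha',\beta')$, and injectivity then gives $z^2C_r\preceq C_{r-1}$.

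The main obstacle is the role of the marked part $\widehat r$. It must absorb exactly the size $r$ lost when $\delta_r$ shrinks to $\delta_{r-1}$. In the branch where the arm lift $\mathsf U_r$ is used, part of that size is carried by $\mu$: $\alpha$ gains only $r-1$ and $\mu$ gains $1$. I will verify that the total still balances in both branches of Lemma \ref{lem:B-map}, and that both branches land in $\mathcal F_{r-1}$; both facts are stated in that lemma. Everything else is routine composition of known bijections.
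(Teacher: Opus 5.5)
Your proposal is correct and follows the paper's proof. It uses the same composite $\beta\mapsto\gamma\mapsto\mathsf K_r(\gamma)\mapsto\mathsf B_r(\alpha,\widehat r,\lambda)\mapsto\mathsf K_{r-1}^{-1}(\mu)\mapsto\delta_{r-1}\sqcup\gamma'$, the same size and length bookkeeping via $T_r=T_{r-1}+r$, and injectivity from the injectivity of each step; the paper only writes that last point out as an explicit left inverse $\Psi_r$.
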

	
	\begin{proof}
		By replacing the staircase	$\delta_r$ with $\delta_{r-1}$. the removed part $r$ is marked by color $B$ of   weight	$q^rz^{-1}$.  Together with the two atoms $(U,U)$, it has weight
		\[
		(q^rz^{-1})z^2=q^rz,
		\]
		which can be seen as the weight of a marked $A$-colored part $\widehat{r}$.   Since $\gamma'\in\mathcal M_{r-1}$, there exists $d^{\prime}\geq0$ such that  $\operatorname{mex}(\beta') = \operatorname{mex}_{r-1}(\gamma') = (r-1) + 2d^{\prime} + 1$. Combine it with the fact that $\alpha'$ is an arbitrary $A$-colored partition containing part $r$ or $r-1$ and check the definition of $\mathcal{C}_{r}$ in \eqref{invariantab}, we have $(\alpha',\beta')\in \mathcal C_{r-1}$.
		
		To prove the injectivity, we construct an explicit left  inverse of
		$\Theta_r$ on its image.  Let
		\[
		(\alpha',\beta')\in\operatorname{Im}(\Theta_r).
		\]
		Since $(\alpha',\beta')\in\mathcal C_{r-1}$, the partition $\beta'$
		contains at least one copy of each of the parts
		$1,2,\ldots,r-1$.  Hence the separation operation
		\[
		\gamma':=\beta'\setminus\delta_{r-1}
		\]
		is uniquely determined.  Moreover,
		\[
		\operatorname{mex}_{r-1}(\gamma')
		=\operatorname{mex}(\beta')
		=r+2d
		\]
		for some $d\ge0$, and therefore
		$\gamma'\in\mathcal M_{r-1}$.  We may thus recover uniquely
		\[
		\mu:=\mathsf K_{r-1}(\gamma')\in\mathcal F_{r-1}.
		\]
		
		Because $(\alpha',\beta')$ lies in the image of $\Theta_r$, 
		the pair	$(\alpha',\mu)$ lies in the image of the marked-part injection $\mathsf B_r$.
		Lemma \ref{lem:B-map} shows that the branch of $\mathsf B_r$ used in the  construction is determined	uniquely by the Frobenius top row of $\mu$.
		
		If
		$r\notin\operatorname{Top}(\mu),$
		then $(\alpha',\mu)\in\mathcal I_r^{(0)} $ which admits the first branch of $\mathsf B_r$.  In particular,
		$\alpha'$ contains a part $r$, and the unique predecessor pair is
		\[
		\alpha=\alpha'\setminus(r),
		\qquad
		\lambda=\mu.
		\]		 
		
		If $r\in\operatorname{Top}(\mu),$
		then $(\alpha',\mu)\in\mathcal I_r^{(1)} $  which admits the second branch of $\mathsf B_r$.  Since
		$\mu\in\mathcal F_{r-1}$, we know its Frobenius top row contains no entry
		$r-1$.
		Further note that $\alpha'$ contains a part $r-1$, and in this case the
		unique predecessor pair is
		\[
		\alpha=\alpha'\setminus(r-1),
		\qquad
		\lambda=\mathsf U_r^{-1}(\mu).
		\]
		Thus in either case the triple
		$(\alpha,\widehat r,\lambda)$ is recovered uniquely from
		$(\alpha',\mu)$ and $\lambda\in\mathcal F_r$.
		
		Finally, applying the inverse map of Konan's  bijection at level $r$, we obtain
		\[
		\gamma:=\mathsf K_r^{-1}(\lambda)\in\mathcal M_r,
		\qquad
		\beta:=\delta_r\sqcup\gamma,
		\]
		and therefore   a unique element
		$ 
		(U,U;\alpha,\beta)\in \{(U,U)\}\times\mathcal C_r$ is derived. 
		Denoting this reverse map by $\Psi_r$, together with $\Theta_r$ gives
		\[
		\Psi_r\!\left(\Theta_r(U,U;\alpha,\beta)\right)
		=(U,U;\alpha,\beta).
		\]
		Hence $\Psi_r$ is a left inverse of
		$\Theta_r$, and thereby $\Theta_r$ is injective.
		
		Konan's map  preserves size and length, while Lemma \ref{lem:B-map} gives
		\[
		|\alpha'|+|\gamma'|=|\alpha|+r+|\gamma|,
		\qquad
		\ell(\alpha')=\ell(\alpha)+1,
		\qquad
		\ell(\gamma')=\ell(\gamma).
		\]
		Then by the relations $|\delta_r|-|\delta_{r-1}|=r$ and
		$\ell(\delta_r)-\ell(\delta_{r-1})=1$, it yields the two conditions
		\[
		|\alpha'|+|\beta'|=|\alpha|+|\beta|,
		\qquad
		\ell(\alpha')-\ell(\beta')
		=\ell(\alpha)-\ell(\beta)+2.
		\]
		After the two atoms $(U,U)$ are included, the initial element  $ (U,U;\alpha,\beta)$  and its image $\Theta_r(U,U;\alpha,\beta)$ contribute to the terms of the same bidegree  in the generating function, and thus the injection gives \eqref{eq:core-coeff-ineq}.
	\end{proof}

	We now use an example to illustrate each step of the injection $\Theta_r$. 	Let
	\[
	r=2,\qquad \alpha=\varnothing,\qquad
	\beta=(4,4,2,1),
	\]
	and take the two  atoms \((U,U)\) into consideration.  Since $
	\operatorname{mex}(\beta)=3$,
	we have \((\alpha,\beta)\in\mathcal C_2\).
	\begin{itemize}
		\item[(1)] \emph{Remove the   staircase $\delta_r$ from $\beta$.}
		The staircase at level \(2\) is \(\delta_2=(2,1)\), and hence
		\[
		\beta=\delta_2\sqcup\gamma,
		\qquad
		\gamma=(4,4).
		\]
		Moreover, \(\operatorname{mex}_2(\gamma)=3\), so
		\(\gamma\in\mathcal M_2\).
		
		\item[(2)]	 \emph{Encode by Konan's bijection on $\gamma$.} Following the detailed procedures as given in Appendix \ref{chap:appendix}, it gives
		\[
		\lambda=\mathsf K_2(\gamma)=\mathsf K_2(4,4)=(5,3).
		\]
		By the Frobenius symbol, we have
		\[
		(4,4)\longleftrightarrow
		\begin{pmatrix}3&2\\1&0\end{pmatrix},\qquad
		(5,3)\longleftrightarrow
		\begin{pmatrix}4&1\\1&0\end{pmatrix}.
		\]
		Thus \(\operatorname{Top}(\lambda)=\{4,1\}\) avoids \(2\), as required for
		\(\lambda\in\mathcal F_2\).  The encoding of Konan's map preserves both size and length:
		\(|\lambda|=|\gamma|=8\) and \(\ell(\lambda)=\ell(\gamma)=2\).
		
		\item[(3)] \emph{Convert the removable $B$-colored part and the two atoms into a
			marked \(A\)-colored part $\widehat{r}$.}
		Replace \(\delta_2\) with \(\delta_1=(1)\) and removes the
		\(B\)-colored part \(2^B\). Together with the two atoms $U$, this part has total weight
		\[
		(q^2z^{-1})z^2=q^2z,
		\]
		which can be seen as the weight of a marked $ A$-colored part of size 2. 
		
		\item[(4)] \emph{Apply \(\mathsf B_r\) to 	\((\alpha,\widehat{r},\lambda)=(\varnothing,\widehat2,(5,3))\).}
		Since \(r-1=1\) occurs in \(\operatorname{Top}(\lambda)=\{4,1\}\), we applies the
		second branch of \(\mathsf B_2\) which raises the top-row entry \(1\)
		to \(2\), and leaves the bottom row unchanged,  which leads to
		\[
		\begin{pmatrix}4&1\\1&0\end{pmatrix}
		\longmapsto
		\begin{pmatrix}4&2\\1&0\end{pmatrix}.
		\]
		The latter Frobenius symbol corresponds to \(\mu=(5,4)\).  Consequently,
		\[
		\mathsf B_2(\varnothing,\widehat2,(5,3))=((1),(5,4)),
		\]
		so \(\alpha'=(1)\) and \(\mu=(5,4)\).  Notice that
		\(\operatorname{Top}(\mu)=\{4,2\}\) omits \(1\), and hence
		\(\mu\in\mathcal F_1\).
		
		\item[(5)] \emph{Decode at the new level $\mathsf K_{r-1}^{-1}(\mu)$.}
		At level \(1\), 
		\(\operatorname{mex}_1(5,4)=2\), so \((5,4)\in\mathcal M_1\).  Therefore
		\[
		\gamma'=\mathsf K_1^{-1}(\mu)
		=\mathsf K_1^{-1}(5,4)=(5,4).
		\]
		
		\item[(6)] \emph{Attach $\delta_{r-1}$ to derive $\beta'=\delta_{r-1}\sqcup\gamma'$.}
		Adjoining \(\delta_1=(1)\) gives
		\[
		\beta'=\delta_1\sqcup\gamma'=(5,4,1).
		\]
		Since \(\operatorname{mex}(\beta')=2=1+2\cdot0+1\), the output $(\alpha',\beta')$ belongs to
		\(\mathcal C_1\).  Altogether, we have 
		\[
		\Theta_2(U,U;\varnothing,(4,4,2,1))=((1),(5,4,1)).
		\]
	\end{itemize}
	
	Note that the above process can be reversed step by step to  recover the original elements.
	Under the injection $\Theta_r$, the bidegrees contributes to the generating functions can be checked directly. As the above example, the initial element in the invariant subset $\mathcal{C}_2$ has weight
	\[
	q^{|\beta|}z^{-\ell(\beta)}=q^{11}z^{-4},
	\]
	and  multiplied by the weight of the two atoms gives \(q^{11}z^{-2}\).
	The target element in $\mathcal{C}_1$ has weight
	\[
	q^{|\alpha'|+|\beta'|}z^{\ell(\alpha')-\ell(\beta')}
	=q^{1+10}z^{1-3}=q^{11}z^{-2}.
	\]
	Thus the total weight is preserved. 
	
	Now, we are ready to prove our main result. 
	
	\begin{proof}[Proof of Theorem~\ref{thm:main}]
		It is directly to see that 
		\[
		1+z+\cdots+z^{2k}
		=\sum_{m=0}^{2k-2}z^m+z^{2k-1}(1+z).
		\]
		For $a\ge0$, denote $r_a=k+2a+1$.  In \eqref{eq:com-decomposition}, the
		negative blocks occur exactly at  $r=r_a$ for some $a$ and we pair them with the preceding positive blocks at $r_a-1$ for $a>0$. When $a=0$, the needed positive block comes from $z^{2k-1}(1+z)$ multiplied by $C_k(z,q)$.  Hence
		\[
		\begin{aligned}
			J_k(z,q)
			={}&\left(\sum_{m=0}^{2k-2}z^m\right)C_k(z,q)+\sum_{a=0}^{\infty}z^{2r_a-3}(1+z)
			\Bigl(C_{r_a-1}(z,q)-z^2C_{r_a}(z,q)\Bigr).
		\end{aligned}
		\]
		For $k\geq 1$, each $C_k(z,q)$ is a  generating
		function with nonnegative coefficients, and each difference in the parentheses is nonnegative by using
		\eqref{eq:core-coeff-ineq}.  Thus $J_k(z,q)$ is coefficientwise
		nonnegative.
	\end{proof}

	By  Theorem \ref{thm:main} and Corollary \ref{cor:merca}, we conclude this section with some partition inequalities.  Let $p_2(n,s)$ be
	the number of ordered pairs of partitions $(\alpha,\beta)$ satisfying
	$|\alpha|+|\beta|=n$ and
	$\ell(\alpha)-\ell(\beta)=s$. 
	For $1\le S<R$, we let $p_{R,S}(N)$ count two-colored partitions of $N$ with
	$A$-colored parts being congruent to $S\pmod R$ and  $B$-colored parts being congruent to $-S\pmod R$.  Set  $p_2(n,s)=0$ if $n<0$ and	$p_{R,S}(N)=0$ for $N<0$. By extracting the coefficient $[q^nz^s]$ of $J_k(z,q)$ in
	Theorem~\ref{thm:main} and the coefficient $[q^N]$ of $\mathcal T_{R,S,k}(q)$ in Corollary \ref{cor:merca}, respectively, we obtain the following inequalities. 
	
	\begin{corollary}[Truncated partition inequalities]\label{cor:partition-inequalities}
		For $k\ge1$, $n\ge0$, and $s\in\mathbb Z$,
		\[
		\sum_{j=k}^{\infty}(-1)^{j-k}
		\sum_{u=-j}^{j}p_2(n-T_j,s-u)\ge0.
		\]
		For $1\le S<R$ and $N\ge0$,
		\[
		\sum_{j=k}^{\infty}(-1)^{j-k}
		\Bigl(
		p_{R,S}(N-RT_j+Sj)
		-p_{R,S}(N-RT_j-(j+1)S)
		\Bigr)\ge0.
		\]		
	\end{corollary}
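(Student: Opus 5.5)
The corollary follows by extracting coefficients from the two positivity statements already established: Theorem~\ref{thm:main} for the first inequality and Corollary~\ref{cor:merca} for the second. Both inequalities are immediate once the numerators are expanded against the denominators; the only care needed is the bookkeeping of exponents.

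\emph{First inequality.} By \eqref{wtab} we have
\[
\frac{1}{(zq,q/z;q)_\infty}=\sum_{n\ge0}\sum_{s\in\mathbb Z}p_2(n,s)q^nz^s .
\]
Multiply this by the numerator $\sum_{j\ge k}(-1)^{j-k}q^{T_j}\sum_{u=-j}^{j}z^u$ of \eqref{eq:J-def}. The coefficient of $q^nz^s$ in the product is
\[
\sum_{j\ge k}(-1)^{j-k}\sum_{u=-j}^{j}p_2(n-T_j,\,s-u).
\]
This sum is finite, because $p_2(n-T_j,\cdot)=0$ once $T_j>n$. By Theorem~\ref{thm:main} this coefficient is nonnegative. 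To justify the rearrangement I would note that all manipulations take place in $\mathscr R$, where each bidegree receives only finitely many contributions.

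\emph{Second inequality.} Since $(q^S,q^{R-S};q^R)_\infty^{-1}=\sum_N p_{R,S}(N)q^N$, a generating function of this shape is what we need. Here $A$-parts are congruent to $S$ and $B$-parts to $R-S\equiv -S \pmod R$, so a two-colored partition of this type of size $N$ is generated by the first factor for $A$ and the second for $B$. The numerator of \eqref{conj-stronger} splits into the two monomials
\[
q^{RT_j-Sj}\quad\text{and}\quad -q^{RT_j-Sj+(2j+1)S}=-q^{RT_j+(j+1)S}.
\]
Extracting $[q^N]$ therefore gives exactly
\[
\sum_{j\ge k}(-1)^{j-k}\bigl(p_{R,S}(N-RT_j+Sj)-p_{R,S}(N-RT_j-(j+1)S)\bigr).
\]
This sum is again finite, since $RT_j-Sj\ge T_j(R-S)\to\infty$. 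It is nonnegative by Corollary~\ref{cor:merca}.

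For $N=0$, only a finite check is needed. The conjecture asserts nonnegativity only for $n\ge1$, but specializing Theorem~\ref{thm:main} via $(z,q)=(q^S,q^R)$ gives nonnegativity of every coefficient, including the constant term. So no extra work is required. The one point to verify, which is the only mild obstacle, is exactly that specialization step: each $z$-Laurent coefficient of $[q^n]J_k$ maps to a monomial $q^{Rn+Ss}$ with $Rn+Ss\ge0$. This holds because $|s|\le n$ in any two-colored partition term, and terms with negative exponent cannot occur. Hence nonnegative coefficients are carried to nonnegative coefficients.
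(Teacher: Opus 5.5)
Your proposal is correct and follows the paper's own proof, which simply extracts the coefficient $[q^nz^s]$ of $J_k(z,q)$ using Theorem~\ref{thm:main}, and the coefficient $[q^N]$ of $\mathcal T_{R,S,k}(q)$ using Corollary~\ref{cor:merca}. Your extra care at $N=0$, which Corollary~\ref{cor:merca} does not literally cover, is sound, though a specialization argument is not needed there: for $j\ge k\ge1$, both arguments $-RT_j+Sj$ and $-RT_j-(j+1)S$ are negative, so the sum is trivially $0$.
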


	\section{Concluding remarks}\label{chap:conclu}
	
	Our proof divides the coefficientwise positivity problem into two combinatorial
	stages.  First, the involution on quadruples transforms our target into invariant subsets according to the mex.	Second, the injection \(\Theta_r\) maps each negative invariant subset associated to $z^2C_r(z,q)$  into	the preceding positive one associated to  $C_{r-1}(z,q)$ which preserves the bidegree of \((q,z)\) in the generating functions.  
	A natural next question is whether the invariant subset inequality
	\[
	z^2C_r(z,q)\preceq C_{r-1}(z,q)
	\]
	admits a direct map on the colored pairs \((\alpha,\beta) \) which gives an explicit partition-theoretic interpretation of the coefficientwise  positivity of 
	\(C_{r-1}(z,q)-z^2C_r(z,q)\). Moreover, it is also interesting to consider which kind of truncated theta series admits an analogous generalized-mex--Frobenius injection.
	
	\appendix
	\section{Konan's Bijection}\label{chap:appendix}
	
	This appendix is included  to  introduce the detailed procedures of Konan's bijection  $\mathsf K_r:\mathcal M_r\longrightarrow\mathcal F_r$ on the following two sets
	\[
	\mathcal M_r=\{\lambda:\operatorname{mex}_r(\lambda)-r\text{ is odd}\},\qquad
	\mathcal F_r=\{\lambda:r\notin\operatorname{Top}(\lambda)\},
	\]  which can be found in
	\cite[Sections~2--4]{Konan2023}.
	
	\subsection{The forward and inverse maps}
	
	We adopt the notation of the staircase as given in  \cite[Section 2]{Konan2023}:
	\[
	\Delta_{r,t}=(r+t,r+t-1,\ldots,r+1),
	\qquad \Delta_{r,0}=\varnothing.
	\]
	It is also compatible with the terminal state used in the bijection.
	
	For a partition $\lambda$, denote $\lambda_0=\infty$ and
	\[
	d_s(\lambda)=\max\{i\in\{0,1,\ldots,\ell(\lambda)\}:\lambda_i-i\ge s\}.
	\]
	Then $s\in\operatorname{Top}(\lambda)$ exactly when $d_s(\lambda)\ge1$ and
	$\lambda_{d_s(\lambda)}-d_s(\lambda)=s$.
	
	Let $\gamma\in\mathcal M_r$ and write
	\[
	\operatorname{mex}_{r}(\gamma)=r+2t+1,
	\qquad
	\gamma=\Delta_{r,2t}\sqcup\lambda,
	\]
	where the part $r+2t+1$ is absent from $\lambda$.  For
	$(\Delta_{r,2t},\lambda)$, set $s=r+2t$ and $d=d_s(\lambda)$.  Konan's
	forward map is iterated according to the following two moves.
	
	\medskip
	\noindent\textbf{Move K1: $s\in\operatorname{Top}(\lambda)$.}
	Let $\lambda_d=d+s$.  Delete this critical part, add one to the first
	$d-1$ parts, and insert a part $s+1$:
	\[
	\begin{aligned}
		&(\Delta_{r,2t};
		\lambda_1,\ldots,\lambda_{d-1},d+s,
		\lambda_{d+1},\ldots)\\
		&\qquad\longmapsto
		(\Delta_{r,2t};
		\lambda_1+1,\ldots,\lambda_{d-1}+1,s+1,
		\lambda_{d+1},\ldots).
	\end{aligned}
	\tag{K1}
	\]
	Under this move,  the staircase keeps unchanged, and the residual partition corresponding to $\lambda$ keeps both the size
	and the length.
	
	\medskip
	\noindent\textbf{Move K2: $s\notin\operatorname{Top}(\lambda)$ and $t\ge1$.}
	Remove the two largest parts $s,s-1$ from the staircase, subtract one from the first
	$d$ residual parts, and insert $d+s$ and $s-1$:
	\[
	\begin{aligned}
		&(\Delta_{r,2t};\lambda_1,\ldots,\lambda_d,
		\lambda_{d+1},\ldots)\\
		&\qquad\longmapsto
		(\Delta_{r,2t-2};
		\lambda_1-1,\ldots,\lambda_d-1,d+s,s-1,
		\lambda_{d+1},\ldots),
	\end{aligned}
	\tag{K2}
	\]
	with the residual parts reordered.  Under this move, the size of $\lambda$  increases by $s+(s-1)$ and its
	length increases by two, exactly offsetting the removed parts of the staircase.
	
	Iteration stops at a state $(\Delta_{r,0},\nu)$ with
	$r\notin\operatorname{Top}(\nu)$.  Konan sets $\mathsf K_r(\gamma)=\nu$ and proves that
	this procedure terminates and has an inverse.  
	
	\subsection{Examples}
	
	We first show the example corresponding to the one after Theorem \ref{thm:core-injection}. Let $r=2$ and $\gamma=(4,4)$.  Since
	$\operatorname{mex}_{2}((4,4))=3$,  we have $t=0$ and the shifted staircase $\Delta_{r,2t}$ is empty.  The top row of
	$(4,4)$ is $(3,2)$, so K1 should be applied with $s=2$ and $d=2$. We delete the second
	part $4$, increase the first part to $5$  and insert $3$.  Thus
	\[
	\mathsf K_2(4,4)=(5,3).
	\]
	Both partitions have size $8$ and length $2$, and
	$\operatorname{Top}(5,3)=\{4,1\}$ avoiding $2$. See Figure \ref{fig:Konan-one-step}.
	
	\begin{figure}[h]
		\centering
		\begin{tikzpicture}[>=Latex,node distance=18mm]
			\node[draw,rounded corners,align=center,inner sep=6pt] (a)
			{\ydiagram{4,4}\\[3pt] $(4,4)$,\quad $\operatorname{Top}=\{3,2\}$};
			\node[draw,rounded corners,align=center,inner sep=6pt,right=of a] (b)
			{\ydiagram{5,3}\\[3pt] $(5,3)$,\quad $\operatorname{Top}=\{4,1\}$};
			\draw[->,very thick] (a)--node[above]{K1}(b);
		\end{tikzpicture}
		\caption{A one-step instance of Konan's K1 move.  The operation preserves
			size and length while removing the forbidden Frobenius arm $2$.}
		\label{fig:Konan-one-step}
	\end{figure}
	
	We further consider a more involved example. Take
	\[
	r=2,
	\qquad
	\gamma=(6,5,4,3).
	\]
	Since
	\[
	\operatorname{mex}_{2}({\gamma})=7=2+2\cdot2+1,
	\]
	we have \(t=2\), and the initial decomposition is
	\[
	\gamma=\Delta_{2,4}\sqcup \varnothing,
	\qquad
	\Delta_{2,4}=(6,5,4,3).
	\]
	
	At the initial state, \(s=r+2t=6\) and
	\(d_6(\varnothing)=0\).  Since \(6\notin \operatorname{Top}(\varnothing)\), K2
	removes the two largest parts \(6\) and \(5\) of the staircase, and inserts
	these two parts into the residual partition.  Hence
	\[
	(\Delta_{2,4},\varnothing)
	\xrightarrow[\;s=6,\ d=0\;]{\mathrm{K2}}
	(\Delta_{2,2},(6,5)).
	\]
	
	At the second state,
	\[
	s=4,
	\qquad
	d_4(6,5)=1,
	\qquad
	\operatorname{Top}(6,5)=\{5,3\}.
	\]
	Thus \(4\notin \operatorname{Top}(6,5)\), and we apply K2 once more. This deletes the
	staircase parts \(4\) and \(3\), decreases the first residual part
	\(6\) to \(5\), and inserts
	\[
	d+s=5,
	\qquad
	s-1=3.
	\]
	After reordering the residual parts, this gives
	\[
	(\Delta_{2,2},(6,5))
	\xrightarrow[\;s=4,\ d=1\;]{\mathrm{K2}}
	(\varnothing,(5,5,5,3)).
	\]
	
	The shifted staircase is now empty, but the algorithm has not yet
	terminated.  Note that 
	\[
	\operatorname{Top}(5,5,5,3)=\{4,3,2\},
	\]
	so the forbidden Frobenius arm \(r=2\) is still present.  Here
	\[
	s=2,
	\qquad
	d_2(5,5,5,3)=3,
	\qquad
	\lambda_3=5=d+s.
	\]
	Using move K1,  delete the third part \(5\), increases the first
	two parts from \(5\) to \(6\), and inserts \(s+1=3\).  Consequently, we have
	\[
	(\varnothing,(5,5,5,3))
	\xrightarrow[\;s=2,\ d=3\;]{\mathrm{K1}}
	(\varnothing,(6,6,3,3)).
	\]
	
	Finally,
	$ 
	\operatorname{Top}(6,6,3,3)=(5,4,0)
	$,
	which does not contain \(2\).  Thus the resulting partition  
	belongs to \(\mathcal F_2\), and
	\[
	\mathsf K_2(6,5,4,3)=(6,6,3,3).
	\]
	The preservation of  size and   length can also   be observed  directly from the
	complete execution of Konan's forward map for 			\(r=2\) and \(\gamma=(6,5,4,3)\) as illustrated in Figure \ref{fig:Konan-complete-orbit}. 
	\begin{figure}[h!]
		\centering
		\resizebox{.74\textwidth}{!}{%
			\begin{tikzpicture}[>=Latex,node distance=13mm and 20mm]
				
				\node[draw,rounded corners,align=center,inner sep=7pt] (s0)
				{\ydiagram{6,5,4,3}\quad;\quad $\varnothing$\\[4pt]
					$(\Delta_{2,4},\varnothing)$};
				
				\node[draw,rounded corners,align=center,inner sep=7pt,right=of s0] (s1)
				{\ydiagram{4,3}\quad;\quad\ydiagram{6,5}\\[4pt]
					$(\Delta_{2,2},(6,5))$};
				
				\node[draw,rounded corners,align=center,inner sep=7pt,below=of s1] (s2)
				{$\varnothing$\quad;\quad\ydiagram{5,5,5,3}\\[4pt]
					$(\varnothing,(5,5,5,3))$\\[-1pt]
					$\operatorname{ Top}=\{4,3,2\}$};
				
				\node[draw,rounded corners,align=center,inner sep=7pt,left=of s2] (s3)
				{$\varnothing$\quad;\quad\ydiagram{6,6,3,3}\\[4pt]
					$(\varnothing,(6,6,3,3))$\\[-1pt]
					$\operatorname {Top}=\{5,4,0\}$};
				
				\draw[->,very thick]
				(s0)--node[above,align=center]
				{\(\mathrm{K2}\)\\[-1pt]\(\scriptstyle s=6,\ d=0\)}(s1);
				
				\draw[->,very thick]
				(s1)--node[right,align=center]
				{\(\mathrm{K2}\)\\[-1pt]\(\scriptstyle s=4,\ d=1\)}(s2);
				
				\draw[->,very thick]
				(s2)--node[above,align=center]
				{\(\mathrm{K1}\)\\[-1pt]\(\scriptstyle s=2,\ d=3\)}(s3);
				
		\end{tikzpicture}}
		\caption{The complete orbit of Konan's forward map for
			\(r=2\) and \(\gamma=(6,5,4,3)\). }
		\label{fig:Konan-complete-orbit}
	\end{figure}
	
	\section*{Acknowledgments}
	This work is supported by the National Natural Science Foundation of China (Grant No. 12571351, 12071235), Tianjin Natural Science Foundation (No. 24JCZDJC01390) and the Fundamental Research Funds for the Central Universities of China.
	
	\section*{Declaration of competing interest}
	The authors declare that they have no known competing financial interests or personal relationships that could have appeared to influence the work reported in this paper.
	

\end{document}